\documentclass[11pt,reqno]{amsart}

\usepackage{amsmath, amsfonts, amsthm, amssymb, stmaryrd, color, cite}
\usepackage{parskip}
\usepackage{latexsym,hyperref}

\textwidth=15.0cm \textheight=21.0cm \hoffset=-1.1cm \voffset=-0.5cm
\usepackage{amsmath,amssymb,amsfonts}
\usepackage{amsthm}
\usepackage{cite}
\usepackage{amsmath, amsfonts, amsthm, amssymb, stmaryrd, color, cite}

\usepackage{amsthm}
\usepackage{cite}
\usepackage{color}
\usepackage[dvipsnames]{xcolor}

\author{}
\newtheorem{theorem}{Theorem}
\newtheorem{definition}{Definition}
\newtheorem{lemma}{Lemma}

\title[\emph{LIL for a Class of SPDEs}]
{The Law of the Iterated Logarithm for a Class of SPDEs}
\date{}
\author[P. Fatheddin]{Parisa Fatheddin}
\address{Department of Mathematics, University of Pittsburgh, Pittsburgh, 15260, USA.}
\email{PAF49@pitt.edu}

\subjclass[2010]{Primary: 60H15; Secondary: 60F10, 60J68}
\keywords{Law of iterated logarithm, large deviation principle, moderate deviation principle, stochastic
partial differential
 equation, Fleming-Viot process, super-Brownian motion}

\begin{document}
\maketitle
\begin{abstract}
After establishing the moderate deviation principle by the Classical Azencott method, we prove the Strassen's compact law of the iterated logarithm (LIL) for a class of stochastic partial differential equations (SPDEs). As an application, we obtain this type of LIL for two population models known as super-Brownian motion and Fleming-Viot process. In addition, the classical LIL is shown for the class of SPDEs and the two population models.
\end{abstract}

\section{Introduction}

Large deviations has noticeably become an active area of research with applications in queues, communication theory, exit problems and statistical mechanics. It is the study of very rare events that have probability tending to zero exponentially fast and its goal is to determine the exact form of this rate of convergence. Another closely related area of study is moderate deviations, which is proved for events that have probability going to zero at a rate slower than that of large deviations but faster than the rate for central limit theorem. An important application of large and moderate deviations is the law of the iterated logarithm (LIL). Beginning with J. Deuschel, D. Stroock \cite{Stroock} (Lemma 1.4.3), a notable number of authors have used this connection. For instance, P. Baldi \cite{Baldi2}, G. Divanji, K. Vidyalaxmi \cite{Divanji}, B. Jing, Q. Shao, Q. Wang \cite{Jing}, and A. Mogul'skii \cite{Mogulskii} applied their large deviation principle (LDP) to prove LIL; whereas, Y. Chen, L. Liu \cite{Y.Chen} and R. Wang, L. Xu \cite{Asymptotic} applied their result on moderate deviation principle (MDP).\\
 LIL has useful applications in fields such as finance (see for example \cite{Gao, Zinchenko}). There are different forms of LIL in the literature: Classical LIL, Strassen's Compact LIL, Chover's type, and Chung's type, which inherited names from the authors who introduced them; namely, A. Khintchine \cite{Khintchine}, V. Strassen \cite{Strassen}, J. Chover \cite{Chover}, and K. Chung \cite{Chung}, respectively. In section two, we provide a description of each type of LIL. For a more detailed introduction and history on each type we recommend \cite{Bingham}. As one may observe from the literature, every type of LIL may be derived from large and moderate deviations by the use of the Borel-Cantelli lemma. The most common form for this application is the Stassen's compact LIL. Here we prove the MDP by Azencott method and as an application establish the Strassen's compact LIL. We note that LDP and MDP for the class of SPDEs and the population models considered here were achieved in \cite{large} and \cite{moderate}, respectively by the weak convergence approach introduced by \cite{Budhiraja2, BDM}. Here the MDP by the Azencott method provides us the Freidlin-Wentzell inequality, which plays a major role in proving the Strassen's compact LIL. \\
To achieve the Strassen's compact LIL, one needs to show that the centered process multiplied by $1/\sqrt{2\log \log t}$ is relatively compact and then specify the set of limit points. Since our process is real-valued, we obtain that it is relative compact by proving its tightness property by a classical method. Moreover, to determine the set of limit points, we apply the result introduced by P. Baldi \cite{Baldi1} and implemented in \cite{Eddhabi, Nzi, Ouahra}. Other methods have also been applied by some authors to establish that their process is relatively compact. A. Dembo, T. Zajic \cite{Dembo2} and L. Wu \cite{Wu} prove this condition by showing that their process is totally bounded. A. Schied \cite{Schied}, attains the Strassen's compact LIL for super-Brownian motion (SBM) in all dimensions, $d\geq 1$, as a corollary to its moderate deviation result also given in \cite{Schied}. Similar to many other results in LIL derived from LDP or MDP, A. Schied uses the rate function in MDP to form the set of limit points. Since his rate function is a good rate function, he applies the compactness property of its level sets and Lemma 1.4.3 in \cite{Stroock}. Different from results in \cite{Schied}, here our proof of LIL for SBM relies on the more recent presentation of SBM by a stochastic PDE given by \cite{J.Xiong}.  To the best of our knowledge, LIL has not been proven for Fleming-Viot Process (FVP) in the literature. Also the Classical LIL for SBM and FVP would be new contributions. \\
We begin in section two with notations and spaces used throughout the paper and provide the main results. Also we offer some definitions and background on the Azencott method, LIL and the two population models. Section three is devoted to achieving the LDP, Strassen's compact LIL and Classical LIL for the class of SPDEs and in section four the results are applied to obtain the LDP and the two types of LIL for SBM and FVP.

\section{Notation and Main Results}
 Following the notation given in \cite{large, moderate}, we introduce the space used here as follows. Suppose $(\Omega, \mathcal{F}, P)$ is a probability space and $\{\mathcal{F}_{t}\}_{t\geq 0}$ is a family of non-decreasing, right continuous sub-$\sigma$-fields of $\mathcal{F}$ such that $\mathcal{F}_{0}$ contains all $P$-null subsets of $\Omega$. We denote $\mathcal{C}_{b}(\mathbb{R})$ to be the space of continuous bounded functions on $\mathbb{R}$ and $\mathcal{C}_{c}(\mathbb{R})$ to be composed of continuous functions in $\mathbb{R}$ with compact support. For $0<\beta \in \mathbb{R}$, let $\mathcal{M}_{\beta}(\mathbb{R})$ denote the set of $\sigma$-finite measures $\mu$ on $\mathbb{R}$ such that,
\begin{equation}\label{Mbeta}
\int e^{-\beta |x|} d\mu(x) <\infty,
\end{equation}
and let $\mathcal{P}_{\beta}(\mathbb{R})$ be the set of probability measures satisfying \eqref{Mbeta}. We endow these spaces with the topology defined by a modification of the usual weak topology: $\mu^{n}\rightarrow \mu$ in $\mathcal{M}_{\beta}(\mathbb{R})$ (respectively in $\mathcal{P}_{\beta}(\mathbb{R})$) iff for every $f\in \mathcal{C}_{b}(\mathbb{R})$,
\begin{equation*}
\int_{\mathbb{R}}f(x)e^{-\beta |x|} \mu^{n}(dx) \rightarrow \int_{\mathbb{R}}f(x)e^{-\beta |x|}\mu(dx).
\end{equation*}
 For $\alpha \in (0,1)$, consider the space $\mathbb{B}_{\alpha, \beta}$ composed of all functions $f:\mathbb{R}\rightarrow \mathbb{R}$ such that for all $m\in \mathbb{N}$, there exist $K_{1}, K_{2}>0$ with the following conditions:
\begin{eqnarray}
\left|f(y_{1})-f(y_{2})\right|&\leq& K_{1}e^{\beta m}|y_{1}-y_{2}|^{\alpha} , \hspace{.4cm} \forall |y_{1}|,|y_{2}| \leq m\\ \label{cond1}
|f(y)| &\leq& K_{2}e^{\beta |y|}, \hspace{.4cm} \forall y\in \mathbb{R},\label{cond2}
\end{eqnarray}
and with the metric,
\begin{equation*}
d_{\alpha,\beta}(u,v)=\sum^\infty_{m=1}2^{-m}(\|u-v\|_{m,\alpha,\beta}\wedge1),\qquad u,\;v\in\mathbb{B}_{\alpha,\beta},
\end{equation*}
 where
 \begin{equation*}
\|u\|_{m,\alpha,\beta}=\sup_{x\in\mathbb{R}}e^{-\beta|x|}|u(x)|+\sup_{y_{1}\neq y_{2}\\ \left|y_{1}\right|,\left|y_{2}\right|\leq m}\frac{|u(y_{1})-u(y_{2})|}{|y_{1}-y_{2}|^\alpha}e^{-\beta m}.
\end{equation*}
We refer to the collection of continuous functions on $\mathbb{R}$ satisfying \eqref{cond2} as $\mathbb{B}_{\beta}$, which is a Banach space with norm,
\begin{equation*}
\|f\|_{\beta}= \sup_{x\in \mathbb{R}} e^{-\beta |x|}|f(x)|.
\end{equation*}
The above space was used to match the setup in \cite{BDM}, where weak convergence approach was introduced to prove the large deviation principle and the technical difficulties in time discretization in classical Azencott method were avoided.\\
We now give a short introduction on the two population models under study. For more information we refer the reader to \cite{Dawson,Dynkin, Etheridge, J.Xiong2}. SBM is the continuous version of branching Brownian motion, the most classical and best known branching process where individuals reproduce according to Galton-Watson process. Since the population is set to evolve as a cloud in $\mathbb{R}^{d}$, it is a measure-valued process and because of its branching property, we associate a branching rate, denoted as $\varepsilon$. One of the common ways used to characterize SBM, denoted as $\{\mu_{t}^{\varepsilon}\}_{\varepsilon >0}$, is by the unique solution to the martingale problem given as: for all $f\in \mathcal{C}_{b}^{2}(\mathbb{R})$,
\begin{equation*}
M_{t}(f):= \left<\mu_{t}^{\varepsilon},f\right>-\left<\mu_{0}^{\varepsilon},f\right>
-\int_{0}^{t}\left<\mu_{s}^{\varepsilon},\frac{1}{2}\Delta f\right> ds,
\end{equation*}
is a square-integrable martingale with quadratic variation,
\begin{equation*}
\left<M(f)\right>_{t}= \varepsilon \int_{0}^{t} \left<\mu_{s}^{\varepsilon},f^{2}\right>ds,
\end{equation*}
 (see \cite{Etheridge} Section 1.5 for this formulation). Recently, \cite{J.Xiong} offered the following SPDE to characterize SBM,
\begin{equation}\label{SBM}
u_{t}^{\varepsilon}(y)=F(y)+ \int_{0}^{t}\int_{0}^{u_{s}^{\varepsilon}(y)} W(dads) + \int_{0}^{t} \frac{1}{2} \Delta u_{s}^{\varepsilon}(y)ds,
\end{equation}
where,
 \begin{equation}\label{SBM d}
u_{t}^{\varepsilon}(y)=\int_{0}^{y}\mu_{t}^{\varepsilon}(dx), \hspace{1cm} \forall y\in \mathbb{R},
\end{equation}

  $F(y)=\int_{0}^{y}\mu_{0}(dx)$, and $W$ is an $\mathcal{F}_{t}$-adapted space-time white noise random measure on $\mathbb{R}^{+}\times \mathbb{R}$ with intensity measure $dsda$.

The other model studied here is FVP, which observes the evolution of the population based on the genetic type of individuals. It is the continuous version of the step-wise mutation model, in which individuals move in $\mathbb{Z}^{d}$ according to a continuous time sample random walk. In FVP, the population is fixed throughout time with each individual having a gene type and every time a mutation occurs the individual changes gene type and moves to a new location. Therefore, the distribution of gene types is observed, making FVP a probability-measure valued process with mutation rate given by $\varepsilon$. More background on FVP can be found in \cite{Etheridge, Kurtz1, Kurtz2}. Similar to SBM, FVP can be given by the unique solution to the following martingale problem: for $f\in \mathcal{C}_{c}^{2}(\mathbb{R})$,
\begin{equation*}
M_{t}(f)= \left<\mu_{t}^{\varepsilon},f\right>-\left<\mu_{0}^{\varepsilon},f\right>-\int_{0}^{t} \left<\mu_{s}^{\varepsilon},\frac{1}{2}\Delta f\right>ds,
\end{equation*}
is a continuous square-integrable martingale with quadratic variation,
\begin{equation*}
\left<M_{t}(f)\right>= \varepsilon \int_{0}^{t}\left(\left<\mu_{s}^{\varepsilon},f^{2}\right>-\left<\mu_{s}^{\varepsilon},f\right>^{2}\right)ds,
\end{equation*}
(see \cite{Etheridge} Section 1.11 for more details on this formulation). An SPDE characterization of FVP was also made in \cite{J.Xiong}. There by using,
\begin{equation}\label{FVP d}
u_{t}^{\varepsilon}(y)= \mu_{t}^{\varepsilon}((-\infty,y]),
\end{equation}
FVP was proved to be given by,
\begin{equation}\label{FVP}
u_{t}^{\varepsilon}(y)= F(y) + \int_{0}^{t}\int_{0}^{1} \left(1_{a\leq u_{s}^{\varepsilon}(y)}-u_{s}^{\varepsilon}(y)\right)W(dsda) + \int_{0}^{t} \frac{1}{2}\Delta u_{s}^{\varepsilon}(y)ds,
\end{equation}
with the same description for $F(y)$ and noise as for SBM in \eqref{SBM}. Note that the main difference between (\ref{SBM}) and (\ref{FVP}) is in the second term. Hence, as in \cite{large, moderate}, we consider the following class of SPDEs and have SBM and FVP as special cases,
\begin{equation}\label{SPDE}
u_{t}^{\varepsilon}(y)=F(y)+\sqrt{\varepsilon}\int_{0}^{t}\int_{U}G(a,y,u_{s}^{\varepsilon}(y))W(dads)
+\int_{0}^{t}\frac{1}{2}\Delta u_{s}^{\varepsilon}(y)ds,
\end{equation}
where $(U,\mathcal{U},\lambda)$ is a measure space such that $L^{2}(U,\mathcal{U},\lambda)$ is separable, $F$ is a function of $\mathbb{R}$ and $u_{1},u_{2},u,y\in \mathbb{R}$. In addition, $G:U\times \mathbb{R}^{2}\rightarrow \mathbb{R}$ satisfies the following conditions,
\begin{eqnarray}
\int_{U}\left|G(a,y,u_{1})-G(a,y,u_{2})\right|^{2}\lambda(da)&\leq& K|u_{1}-u_{2}|,\label{con1}\\
\int_{U}\left|G(a,y,u)\right|^{2}\lambda(da)&\leq& K\left(1+|u|^{2}\right)\label{con2}.
\end{eqnarray}
We note that since the well-posedness of \eqref{SPDE} achieved in \cite{J.Xiong} was in dimension one, our results are limited to dimension one only. For moderate deviation principle, we consider the centered process,
\begin{equation}\label{centered}
v_{t}^{\varepsilon}(y)= \frac{a(\varepsilon)}{\sqrt{\varepsilon}} \left(u_{t}^{\varepsilon}(y)-u_{t}^{0}(y)\right),
\end{equation}
where $a(\varepsilon)$ satisfies,
\begin{equation}\label{conditions}
0\leq a(\varepsilon)\rightarrow 0,\hspace{.2cm} \frac{a(\varepsilon)}{\sqrt{\varepsilon}} \rightarrow \infty \text{ as } \varepsilon \rightarrow 0.
\end{equation}
One may observe that the speed, $a(\varepsilon)$ of moderate deviations is less than $\sqrt{\varepsilon}$, the speed for large deviations; hence, the term moderate is used. For the rate of decay we need the following controlled PDE version of \eqref{centered}, also referred to as the skeleton equation given by,
\begin{equation}\label{controlled}
S_{t}(h,y)=\int_{0}^{t}\int_{U}G(a,y,u_{s}^{0}(y))h_{s}(a)\lambda(da)ds + \frac{1}{2}\int_{0}^{t}\Delta S_{s}(h,y)ds,
\end{equation}
where $h_{s}\in L^{2}\left([0,1]\times U, ds\lambda(da)\right)$. It is not difficult to show that for every $h_{s}(\cdot)$ there is a unique solution to (\ref{controlled}). For MDP, the first term of SPDE \eqref{SPDE}, $F(y)$, is assumed to be in space, $\mathbb{B}_{\alpha,\beta_{0}}$ where $\beta_{0}\in (0,\beta)$. By the classical Azencott method, we prove the following MDP result.

\begin{theorem}\label{them1}
If $F\in \mathbb{B}_{\alpha,\beta_{0}}$ for $\alpha \in \left(0,\frac{1}{2}\right)$, then family $\{v^{\varepsilon}_{t}(y)\}_{\varepsilon>0}$ satisfies the LDP in $\mathcal{C}([0,1];\mathbb{B}_{\beta})$ with speed $a(\varepsilon)$ and  rate function,
\begin{equation}\label{rate1}
I(g)= \frac{1}{2}\inf\left\{\int_{0}^{1}\int_{U}\left|h_{s}(a)\right|^{2}\lambda(da)ds: g = S_{t}(h,y)\right\},
\end{equation}
which implies that family $\{u_{t}^{\varepsilon}(y)\}_{\varepsilon >0}$ obeys the MDP.
\end{theorem}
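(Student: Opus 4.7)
The plan is to apply the weak convergence approach of Budhiraja, Dupuis and Maroulas (\cite{BDM}) directly to the centered process $v_t^{\epsilon}$ defined in \eqref{centered}. First I would subtract the deterministic equation for $u_t^0$ from \eqref{SPDE} and multiply through by $a(\epsilon)/\sqrt{\epsilon}$, obtaining
\[
v_t^{\epsilon}(y) = a(\epsilon)\int_0^t\int_U G(a,y,u_s^{\epsilon}(y))\,W(dads) + \frac{1}{2}\int_0^t \Delta v_s^{\epsilon}(y)\,ds.
\]
The natural candidate for the limiting skeleton is precisely \eqref{controlled}, since as $\epsilon \to 0$ the coefficient $G(a,y,u^\epsilon_s)$ should freeze at $G(a,y,u^0_s)$.

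Next, I would invoke the variational representation and reduce the LDP to the two BDM sufficient conditions for the associated controlled SPDE. Concretely, for a predictable $h^{\epsilon}$ satisfying $\int_0^1\int_U |h^{\epsilon}_s(a)|^2\lambda(da)ds \leq N$ almost surely, let $\tilde v^{\epsilon,h^{\epsilon}}$ solve the analog of the displayed equation above with $W(dads)$ shifted to $W(dads) + a(\epsilon)^{-1}h^{\epsilon}_s(a)\lambda(da)ds$. Then it suffices to verify: (i) whenever $h^{\epsilon}\rightharpoonup h$ weakly in $L^2([0,1]\times U, ds\,\lambda(da))$ within the ball of radius $N$, one has $\tilde v^{\epsilon,h^{\epsilon}} \to S_\cdot(h,\cdot)$ in $\mathcal{C}([0,1];\mathbb{B}_{\beta})$; and (ii) the qualitative compactness of the level sets of $I$, which follows from the corresponding statement for the map $h \mapsto S_\cdot(h,\cdot)$ at the deterministic controlled-PDE level.

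For condition (i), I would first establish the auxiliary convergence $\sup_{t\leq 1}\|\tilde u^{\epsilon,h^{\epsilon}}_t - u^0_t\|_{\beta}\to 0$, using the Lipschitz and growth bounds \eqref{con1}--\eqref{con2} together with a weighted Gronwall argument; because the scaling $a(\epsilon)/\sqrt{\epsilon}\to\infty$ forces the pure stochastic integral (which carries a factor $a(\epsilon)$ in front) to vanish, only the drift generated by the shift survives. The remainder $\tilde v^{\epsilon,h^{\epsilon}} - S_\cdot(h,\cdot)$ splits into a stochastic piece of order $a(\epsilon)$, a Lipschitz drift in the difference $G(a,y,\tilde u^{\epsilon,h^{\epsilon}})-G(a,y,u^0)$ controlled by \eqref{con1} and the $L^2$-bound on $h^{\epsilon}$, and a genuine weak-convergence term of the form $\int_0^t\int_U G(a,y,u^0_s)(h^{\epsilon}_s-h_s)\lambda(da)ds$, which is handled by approximating $G(\cdot,\cdot,u^0)$ by simple functions in $L^2$. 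Condition (ii) reduces to tightness of $\{\tilde v^{\epsilon,h^{\epsilon}}\}$ in $\mathcal{C}([0,1];\mathbb{B}_{\beta})$, obtained by combining heat-kernel Hölder estimates with the assumption $F\in\mathbb{B}_{\alpha,\beta_0}$ for $\beta_0<\beta$ just as in the LDP of \cite{large}.

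The hardest step will be controlling the exponential weight $e^{-\beta|x|}$ uniformly under convolution with the heat semigroup while propagating the square-root-type bound \eqref{con1}, which is only Hölder-$1/2$ and not Lipschitz in $u$; this is what forces the condition $\alpha\in(0,1/2)$ and the two-parameter family of seminorms $\|\cdot\|_{m,\alpha,\beta}$. Once both BDM conditions are verified, the rate function \eqref{rate1} emerges directly from the variational representation as the Legendre transform along admissible controls, and the MDP statement for $\{u^{\epsilon}\}$ follows at once from $u^{\epsilon}-u^0 = (\sqrt{\epsilon}/a(\epsilon))\,v^{\epsilon}$ together with \eqref{conditions}.
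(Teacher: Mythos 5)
Your proposal follows essentially the same route as the source: the paper does not reprove Theorem \ref{them1} but quotes it from \cite{moderate}, where it was established exactly by the weak convergence method of \cite{BDM} applied to the centered process \eqref{centered} with the controlled (skeleton) PDE \eqref{controlled}, verifying the two standard sufficient conditions and reading off the rate function \eqref{rate1}. Your outline matches that strategy, so no comparison beyond this is needed.
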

LDP by Azencott method was first introduced by \cite{Azencott, Priouret} and it may be described as follows. Suppose a family of random variables, $\{X_{1}^{\varepsilon}\}_{\varepsilon>0}$, on a Polish space, $E_{1}$, satisfies the LDP with rate function, $I_{1}: E_{1}\rightarrow [0,\infty]$. Then family $\{X_{2}^{\varepsilon}\}_{\varepsilon>0}$ on another Polish space, $E_{2}$, satisfies the LDP with rate function, $I_{2}(g):= \inf\{I_{1}(f):\Phi(f)=g\}$, if for any $R,\rho, a>0$, there exist $\eta>0$, and $\varepsilon_{0}>0$, such that for any $f\in E_{1}$ with $I_{1}(f)\leq a$ and any $\varepsilon \leq \varepsilon_{0}$,
\begin{equation}\label{exponentialinequality}
P\left(\|X_{2}^{\varepsilon}-\Phi(f)\|_{2}\geq \rho, \|X_{1}^{\varepsilon}-f\|_{1} <\eta\right) \leq \exp\left(-\frac{R}{\varepsilon^{2}}\right),
\end{equation}
where $\Phi: \{I_{1}\leq a\} \rightarrow E_{2}$ is continuous with respect to the topology of $E_{1}$ when restricted to sets $\{I_{1}\leq a\}$ for any $a>0$. Inequality \eqref{exponentialinequality} is referred to as the Freidlin-Wentzell inequality and in the setting of SPDEs, $\Phi(f)$ is the unique solution to the controlled PDE. For some examples of results on LDP for SPDEs by this method we refer the reader to \cite{Nualart, Burgers}. Below is the general definition of LDP. For more background on the large deviations theory we recommend \cite{Budhiraja1, Dembo, Dupuis}.

\begin{definition}[Large Deviation Principle (LDP)] The sequence $\left\{X^{\varepsilon}\right\}_{\varepsilon>0}$ satisfies the LDP on $\mathcal{E}$ with rate function $I$ if the following two conditions hold.\\
a. LDP lower bound: for every open set $U\subset \mathcal{E}$,
\begin{equation*}
-\inf_{x\in U} I(x) \leq \liminf_{\varepsilon \rightarrow 0} \varepsilon \log P(X^{\varepsilon} \in U),
\end{equation*}
b. LDP upper bound: for every closed set $C \subset \mathcal{E}$,
\begin{equation*}
\limsup_{\varepsilon \rightarrow 0} \varepsilon \log P(X^{\varepsilon} \in C) \leq -\inf_{x\in C}I(x).
\end{equation*}
\end{definition}

As for SBM and FVP, each model being a measure-valued process is denoted as $\{\mu_{t}^{\varepsilon}\}_{\varepsilon>0}$ with $\varepsilon$ being the branching rate or mutation rate based on context and is set to go to zero. For SBM, the Cameron-Martin space, $\mathcal{H}$ is used and for FVP we use $\tilde{\mathcal{H}}$, the space for which
conditions for $\mathcal{H}$ hold with $\mathcal{M}_\beta(\mathbb{R})$
replaced by the space of probability measures $\mathcal{P}(\mathbb{R})$, and with the additional assumption,
\begin{equation*}
\left<\mu_{t}^{0}, \frac{ d \left(\dot{\omega}_{t}-\frac{1}{2}\Delta^{*}\omega_{t}\right)}{d\mu_t^{0}}\right> = 0,
\end{equation*}
where for both population models the centered process for MDP is given by,
 \begin{equation}\label{w}
 \omega_{t}^{\varepsilon}(dy):= \frac{a(\varepsilon)}{\sqrt{\varepsilon}} \left(\mu_{t}^{\varepsilon}(dy)- \mu_{t}^{0}(dy)\right).
  \end{equation}
With the above notation, we obtain the following theorems.
\begin{theorem}
If $\omega_{0} \in \mathcal{M}_{\beta}(\mathbb{R})$  such that $F\in \mathbb{B}_{\alpha, \beta_{0}}$ then super-Brownian motion, $\{\mu_{t}^{\varepsilon}\}_{\varepsilon>0}$, obeys the MDP in $\mathcal{C}([0,1];\mathcal{M}_{\beta}(\mathbb{R}))$ with speed $a(\varepsilon)$ and rate function,
\begin{equation}\label{rate2}
  I(\omega)=  \left\{\begin{array} {ll}  \frac{1}{2} \displaystyle \int_{0}^{1}
  \int_{\mathbb{R}}\left|\frac{d\left(\dot{\omega} -\frac{1}{2}\Delta^{*}\omega_{t}\right)}{d\mu_{t}^{0}}y\right|^2 \mu_{t}^{0}(dy) dt
   & \mbox{\emph{if }} \mu_{t}^{0} \in \mathcal{H}_{\omega_{0}}, \\
   \infty
  & \mbox{\emph{otherwise}.}     \end{array}   \right.
 \end{equation}
\end{theorem}

\begin{theorem}
Let $\mathcal{P}_{\beta}(\mathbb{R})$ be the probability measure analog of $\mathcal{M}_{\beta}(\mathbb{R})$. If $\omega_{0} \in \mathcal{P}_{\beta}(\mathbb{R})$ such that $F
\in \mathbb{B}_{\alpha,\beta_{0}}$, then, Fleming-Viot process, $\{\mu^{\varepsilon}\}_{\varepsilon>0}$,
satisfies the MDP on $\mathcal{C}([0,1];
\mathcal{P}_{\beta}(\mathbb{R}))$ with speed $a(\varepsilon)$ and rate function,
\begin{equation}\label{rate3}
  I(\omega)=  \left\{\begin{array} {ll}  \frac{1}{2} \displaystyle \int_{0}^{1}
  \int_{\mathbb{R}}\left|\frac{d \left(\dot{\omega_{t}}-\frac{1}{2}\Delta^{*}\omega_{t}\right)}{d\mu_{t}^{0}}
y  \right|^2 \mu_{t}^{0}(dy) dt
   & \mbox{\emph{if }} \mu_{t}^{0} \in \tilde{\mathcal{H}}_{\omega_{0}}, \\
   \infty
  & \mbox{\emph{otherwise.}}     \end{array}   \right.
 \end{equation}
 \end{theorem}

As mentioned in the introduction, there are different types of LIL that appear in the literature. Below we provide a definition for each type.

\begin{definition}[Law of the Iterated Logarithm (LIL)]
Let $\{X_{j}\}_{j\geq 1}$ be an i.i.d. sequence of random variables with $S_{n}:= \sum_{j=1}^{n}X_{j}$.

i. Classical LIL: $\{X_{j}\}_{j\geq 1}$ is said to satisfy the classical LIL, also referred to as the Khintchine's LIL, if
\begin{eqnarray}
\limsup_{n\rightarrow \infty} \frac{S_{n}-n\mu}{\sigma\sqrt{2n\log\log n}}&=&1 \text{ a.s. }\label{eq1}\\
\liminf_{n\rightarrow \infty} \frac{S_{n}-n\mu}{\sigma\sqrt{2n\log\log n}}&=&-1 \text{ a.s. }\label{eq2}
\end{eqnarray}
for common mean $\mu$ and variance $\sigma^{2}$. We note that this version is also given by (\ref{eq1}) and (\ref{eq2}) with $S_{n}-n\mu$ replaced by $X_{n}$ with $\mu=0$ and $\sigma^{2}=1$. For examples of this form see for instance \cite{Asymptotic, Jing}.

ii. Strassen's Compact LIL: A class of functions $\mathcal{F}$ satisfies Strassen's compact LIL with respect to $\{X_{j}\}_{j\geq 1}$ if there is a compact set $J$ in $\ell_{\infty}(\mathcal{F})$ such that $\{X_{j}\}_{j\geq 1}$ is a.s. relatively compact and its limit set is $J$. See for example \cite{Baldi1,Dembo2, Wu}.

iii. Chover-type LIL: $\{X_{j}\}_{j\geq 1}$ satisfies Chover-type LIL if
\begin{equation}\label{chover}
\limsup_{n\rightarrow \infty} \left(\frac{|S_{n}|}{n^{1/\alpha}}\right)^{\frac{1}{\log \log n}}= e^{1/\alpha} \text{ a.s.}
\end{equation}
for $0<\alpha <2$. For examples of this form see \cite{Peng,Yamamuro}.

iv. Chung-type LIL: Let $S_{n}^{*} = max_{k\leq n}|S_{k}|$. Chung-type LIL for $\{X_{j}\}_{j\geq 1}$ holds if
\begin{equation}\label{chung}
\liminf_{n\rightarrow \infty} \frac{S_{n}^{*} \sqrt{\log\log n}}{\sqrt{n}}= \frac{\pi}{\sqrt{8}} \text{ a.s. }
\end{equation}
For results of this type see for example, \cite{Y.Chen, Mogulskii}.
\end{definition}

We are now ready to give our results on LIL. For $0<\varepsilon <1$, let,
\begin{equation}
Z_{t}^{\varepsilon}(y):= \frac{1}{\sqrt{2\varepsilon \log \log \frac{1}{\varepsilon}}} \left(u_{t}^{\varepsilon}(y)-u_{t}^{0}(y)\right),
\end{equation}
more precisely,
\begin{equation}\label{Zprocess}
Z_{t}^{\varepsilon}(y)= \frac{1}{\sqrt{2 \log \log \frac{1}{\varepsilon}}} \int_{0}^{t}\int_{U}G_{s}^{\varepsilon}\left(a,y,Z_{s}^{\varepsilon}(y)\right)W(dads)+ \int_{0}^{t} \frac{1}{2} \Delta Z_{s}^{\varepsilon}(y)ds,
\end{equation}
where,
\begin{equation}
G_{s}^{\varepsilon}\left(a,y,Z_{s}^{\varepsilon}(y)\right):= G\left(a,y, \sqrt{2\varepsilon \log \log \frac{1}{\varepsilon}} Z_{s}^{\varepsilon}(y) + u_{s}^{0}(y)\right).
\end{equation}
 Therefore, we have the process $\{v_{t}^{\varepsilon}(y)\}_{\varepsilon>0}$ from moderate deviations used in Theorem \ref{them1} with $a(\varepsilon)= 1/\sqrt{2\log \log (1/\varepsilon)}$. One can check that this fulfills the requirements of $a(\varepsilon)$ going to zero as $\varepsilon$ tends to zero, at a rate slower than $\sqrt{\varepsilon}$. Also based on conditions \eqref{con1} and \eqref{con2},
\begin{equation}\label{condition1}
\int_{U}\left|G_{s}^{\varepsilon}\left(a,y,Z^{\varepsilon}_{s,1}(y)\right)-
G_{s}^{\varepsilon}\left(a,y,Z^{\varepsilon}_{s,2}(y)\right)\right|^{2} \lambda(da) \leq
K_{3}\sqrt{2\varepsilon \log \log \frac{1}{\varepsilon}}\left|Z_{s,1}^{\varepsilon}(y)-Z_{s,2}^{\varepsilon}(y)\right|,
\end{equation}
\begin{equation}\label{condition2}
\int_{U}\left|G_{s}^{\varepsilon}\left(a,y,Z_{s}^{\varepsilon}(y)\right)\right|^{2}\lambda(da)\leq
K_{4}\left(1+ \left(2\varepsilon \log \log \frac{1}{\varepsilon}\right)Z_{s}^{\varepsilon}(y)^{2}+ e^{2\beta_{0}|y|}\right),
\end{equation}
where we have used the fact that $F\in \mathbb{B}_{\alpha,\beta_{0}}$, giving by condition \eqref{cond2},
\begin{equation}\label{u0con}
\left|u_{s}^{0}(y)\right|\leq K_{2}e^{\beta_{0}|y|}.
\end{equation}
We point out that the proof of the existence and uniqueness of solutions to SPDE, $\left\{u_{t}^{\varepsilon}(y)\right\}_{\varepsilon>0}$ given in \cite{J.Xiong} only relies on condition \eqref{con1}. Thus, we obtain the well-posedness of solutions to $Z_{t}^{\varepsilon}(y)$ and can use its mild solution given as,
\begin{equation}\label{mild}
Z_{t}^{\varepsilon}(y):= \frac{1}{\sqrt{2\log \log \frac{1}{\varepsilon}}}\int_{0}^{t}\int_{U}P_{t-s}G_{s}^{\varepsilon}(a,y,Z_{s}^{\varepsilon}(y))W(dads),
\end{equation}
where $P_{t-s}$ is the Brownian semigroup defined as $P_{t}f(y)=\int_{\mathbb{R}}p_{t}(x-y)f(x)dx$ with \\
$p_{t}(x-y)=\frac{1}{\sqrt{2\pi t}}e^{-\frac{|x-y|^{2}}{2t}}$. We prove the following results using Theorems 1-3.
\begin{theorem}\label{theorem1}
Process $\{Z_{t}^{\varepsilon}(y)\}_{0<\varepsilon<1}$ is relatively compact in $\mathcal{C}([0,1];\mathbb{B}_{\beta})$ and its set of limit points is exactly $L_{1}:= \left\{g \in \mathcal{C}\left([0,1];\mathbb{B}_{\beta}\right): I(g)\leq 1\right\}$ where $I(g)$ is defined by (\ref{rate1}).
\end{theorem}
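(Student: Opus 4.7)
My plan is to prove the two parts—relative compactness and identification of the limit set as $L_1$—by combining the moderate deviation principle from Theorem \ref{them1} with the Borel--Cantelli machinery of Baldi \cite{Baldi1} and Lemma~1.4.3 of \cite{Stroock}.

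For relative compactness I would derive uniform-in-$\epsilon$ moment and regularity estimates on the mild solution \eqref{mild}. Applying Burkholder--Davis--Gundy to the stochastic integral and using \eqref{condition1}--\eqref{condition2}, the prefactor $1/\sqrt{2\log\log(1/\epsilon)}$ absorbs the $\epsilon$-dependence while the bound $|u_s^{0}(y)|\leq Ke^{\beta_0|y|}$ (with $\beta_0<\beta$) is controlled by the exponential weight $e^{-\beta|y|}$ that defines $\|\cdot\|_\beta$. A Kolmogorov--Chentsov argument then yields Hölder continuity of $Z_t^{\epsilon}(y)$ jointly in $(t,y)$, with constants independent of $\epsilon$; combined with the exponential-weight structure of $\mathbb{B}_{\alpha,\beta}\hookrightarrow\mathbb{B}_{\beta}$ this gives tightness in $\mathcal{C}([0,1];\mathbb{B}_{\beta})$.

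For the upper bound on the limit set, fix a closed set $C\subset\mathcal{C}([0,1];\mathbb{B}_{\beta})$ with $C\cap L_1=\emptyset$; since $I$ is lower semicontinuous and $L_1$ is the compact $1$-level set of the good rate function in \eqref{rate1}, we have $\kappa:=\inf_{C}I>1$. Choose $\theta\in(0,1)$ and set $\epsilon_n=\theta^{n}$, so $a(\epsilon_n)^{-2}=2\log\log(\epsilon_n^{-1})\sim 2\log n$. Theorem~\ref{them1} gives, for any small $\delta>0$ and $n$ large,
\begin{equation*}
P(Z^{\epsilon_n}\in C)\;\leq\;\exp\bigl(-(\kappa-\delta)\cdot 2\log n\bigr)\;=\;n^{-2(\kappa-\delta)},
\end{equation*}
which is summable. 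Borel--Cantelli yields $Z^{\epsilon_n}\notin C$ eventually a.s. An interpolation estimate on $\sup_{\epsilon_{n+1}\leq\epsilon\leq\epsilon_n}d_{\beta}(Z^{\epsilon},Z^{\epsilon_n})$, derived from BDG applied to the difference of the two mild solutions, transfers the conclusion from the subsequence to continuous $\epsilon$, showing that every subsequential limit of $\{Z^{\epsilon}\}$ lies in $L_1$, almost surely.

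The main obstacle is the reverse inclusion: every $g\in L_1$ must arise as a cluster point. The MDP lower bound only gives $P(\|Z^{\epsilon_n}-g\|<\rho)\gtrsim n^{-2(1+\delta)}$, which is summable, and all the $Z^{\epsilon_n}$ are measurable with respect to the \emph{same} white noise $W$, so Borel--Cantelli cannot be applied naively. Following the Baldi strategy implemented in \cite{Eddhabi,Nzi,Ouahra}, I would decompose the mild solution at stage $\epsilon_n$ into a piece driven by the restriction of $W$ to a thin time slice disjoint from those used at stages $\epsilon_1,\dots,\epsilon_{n-1}$, plus a remainder whose smallness is quantified by the upper-bound argument applied to a suitable $I$-sublevel set. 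Choosing $\theta$ small enough that the fresh noise dominates, the resulting ``near-hit'' events become independent, the MDP lower bound applied to each makes the sum of their probabilities diverge, and the second Borel--Cantelli lemma produces infinitely many approximations of $g$. Finally, density of $\{g\in\mathcal{C}([0,1];\mathbb{B}_{\beta}):I(g)<1\}$ in $L_1$, which follows from the quadratic structure of the variational problem \eqref{rate1}, extends the conclusion to the boundary case $I(g)=1$ and completes the identification of $L_1$ as the set of limit points.
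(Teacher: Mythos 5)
Your tightness argument (BDG plus Kolmogorov-type moment estimates on the mild solution, with the weight $e^{-\beta|y|}$ absorbing the $e^{\beta_0|y|}$ growth of $u^0$) is essentially the paper's Theorem \ref{tight}, and your upper-bound step along $\epsilon_n=\theta^n$ with the MDP of Theorem \ref{them1} and an interpolation in $\epsilon$ is a standard and acceptable variant. The genuine gap is in the reverse inclusion, i.e.\ the step showing every $g\in L_1$ is a cluster point. Your mechanism --- decompose $Z^{\epsilon_n}$ into a piece driven by a thin time slice of $W$ disjoint from the slices used at stages $1,\dots,n-1$, argue the fresh noise dominates, and apply the second Borel--Cantelli lemma to the now-independent near-hit events --- does not fit the structure of this problem. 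Unlike the classical Strassen setting, where the scales correspond to time windows $[c^{n-1},c^n]$ of a single path and the last window carries almost all of the variance, here every $Z^{\epsilon_n}$ is defined on the \emph{same} fixed horizon $[0,1]$ and is driven by the noise over all of $[0,1]$ through the stochastic convolution \eqref{mild}; there is no assignment of scales to disjoint time slices, and a thin slice of $[0,1]$ contributes only a negligible fraction of the variance, so the "fresh noise dominates" step fails. Moreover, the remainder control you invoke (the MDP upper bound applied to a sublevel set) is not the right tool: what is needed is a joint, conditional-type estimate tying the solution to the skeleton on the event that the scaled noise is near $h$, namely
\begin{equation*}
P\left(\left\|Z^{\epsilon}-S_{t}(h,y)\right\|_{\infty}>\rho,\ \left\|\tfrac{1}{\sqrt{2\log\log(1/\epsilon)}}W-h\right\|_{\infty}<\eta\right)\leq \exp\left(-2R\log\log\tfrac{1}{\epsilon}\right),
\end{equation*}
and the MDP for $Z^{\epsilon}$ alone does not yield it.

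This estimate is precisely what the paper establishes (the Freidlin--Wentzell-type bound of Theorem \ref{Rlemma}, adapted to the non-Lipschitz noise via Proposition \ref{Proposition5}, proved by time discretization, Lemma \ref{Ylemma}, the exponential inequality of Theorem \ref{exponentialtheorem} from \cite{Nualart}, and Girsanov). With it, the identification of the limit set is obtained by Baldi's transfer argument (Lemma \ref{limit}): Strassen's LIL for the driving Wiener process \cite{Strassen} gives that the events $G_j=\{\|\frac{1}{\sqrt{\log\log c^j}}W^{c^j}-h\|\leq\eta\}$ occur infinitely often almost surely, the exponential estimate makes $P(F_j^c\cap G_j)$ summable, and the \emph{first} Borel--Cantelli lemma then forces $F_j=\{\|Z^{1/c^j}-g\|\leq\epsilon\}$ to occur infinitely often. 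In short, the independence you try to manufacture is instead imported wholesale from the known functional LIL for the noise, and the missing ingredient in your proposal is the exponential closeness estimate between $Z^{\epsilon}$ and $S(h)$ on the event that the normalized noise tracks $h$; without it (or an equivalent substitute), the second--Borel--Cantelli route you sketch cannot be completed.
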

Similarly using the MDP result for SBM and FVP, let
\begin{equation}\label{Ztilde}
\tilde{Z}_{t}^{\varepsilon}:= \frac{1}{\sqrt{2\varepsilon \log \log \frac{1}{\varepsilon}}} \left(\mu_{t}^{\varepsilon}(dy)-\mu_{t}^{0}(dy)\right).
\end{equation}
\begin{theorem}\label{theorem2}
Process $\{\tilde{Z}_{t}^{\varepsilon}\}_{0<\varepsilon<1}$ formed by SBM process, $\{\mu_{t}^{\varepsilon}\}_{\varepsilon>0}$ in (\ref{Ztilde}) is relatively compact in $\mathcal{C}([0,1];\mathcal{M}_{\beta}(\mathbb{R}))$ with set of limit points being
 $L_{2}:= \left\{\omega \in \mathcal{C}([0,1];\mathcal{M}_{\beta}(\mathbb{R})): I(\omega)\leq 1\right\}$, where $I(\omega)$ is given by (\ref{rate2}).
\end{theorem}
\begin{theorem}\label{theorem3}
Process $\{\tilde{Z}_{t}^{\varepsilon}\}_{0<\varepsilon<1}$ formed by FVP process, $\{\mu_{t}^{\varepsilon}\}_{\varepsilon>0}$ in (\ref{Ztilde}) is relatively compact in $\mathcal{C}([0,1];\mathcal{P}_{\beta}(\mathbb{R}))$ with set of limit points being
$L_{3}:= \left\{\omega \in \mathcal{C}([0,1];\mathcal{P}_{\beta}(\mathbb{R})): I(\omega)\leq 1\right\}$, where $I(\omega)$ is given by (\ref{rate3}).
\end{theorem}
Following the setup in \cite{Y.Chen, Wang,Asymptotic} we prove the classical LIL in our stochastic PDEs setting. We note that most results on classical LIL such as \cite{Davie, Jing} have been achieved for a sum of independent identically distributed random variables, where the Borel-Cantelli lemma is the main ingredient of the proof. We prove the classical LIL for the class of SPDEs and the population models by showing that for each respective family, $\{X^{\varepsilon}\}_{0<\varepsilon<1}$ of solutions,
\begin{eqnarray}
\limsup_{\varepsilon \rightarrow 0} \frac{\|X^{\varepsilon}-X^{0}\|_{\chi}}{\sqrt{2\varepsilon \log \log \frac{1}{\varepsilon}}}&=&1 \hspace{.4cm} \text{a.s.}, \label{limsup}\\
\liminf_{\varepsilon\rightarrow0} \frac{\|X^{\varepsilon}-X^{0}\|_{\chi}}{\sqrt{2\varepsilon \log \log \frac{1}{\varepsilon}}} &=&-1 \hspace{.4cm} \text{a.s.},
\label{liminf}
\end{eqnarray}
where $\chi$ is $\mathbb{B}_{\beta}$ for the class of SPDEs and it is $\mathcal{M}_{\beta}(\mathbb{R})$ and $\mathcal{P}_{\beta}(\mathbb{R})$ for SBM and FVP, respectively.

\section{LIL for Class of SPDEs}

We begin by proving Theorem 1, for which we derive the following Freidlin-Wentzell inequality based on our setting,
\begin{equation}\label{maininequality}
P\left(\left\|Z_{t}^{\varepsilon}-S_{t}(h,y)\right\|_{\beta}>\rho, \left\|\frac{1}{\sqrt{2\log \log \frac{1}{\varepsilon}}} W - h_{s}\right\|_{\infty}<\eta\right)
\leq \exp\left(-2R \log \log \frac{1}{\varepsilon}\right).
\end{equation}
Note that by Schilder's theorem, the LDP holds for Brownian sheet, $W$ with rate function denoted here by $\tilde{I}(\cdot)$.

\begin{lemma}
For every $a>0$, $S_{t}(\cdot,y):\{\tilde{I} \leq a\} \rightarrow \mathcal{C}([0,1];\mathbb{B}_{\beta})$ is continuous with respect to the uniform topology.
\end{lemma}

\begin{proof}
Let $a>0$ and $h_s,k_s\in L^{2}([0,1]\times U, ds\lambda(da))$ with $|h_{s}| \vee |k_{s}|\leq a$. By H\"older's inequality and \eqref{con2}, we have,
\begin{eqnarray}\label{Sinequality}
&&\|S_{t}(h_{s},x)-S_{t}(k_{s},x)\|_{\beta}^{2} \\
&=& \sup_{x\in \mathbb{R}} e^{-2\beta |x|} \left|\int_{0}^{t} \int_{U}P_{t-s}G(a,x,u_{s}^{0}(x))(h_{s}(a)-k_{s}(a))\lambda(da)ds\right|^{2}\nonumber\\
&\leq& \sup_{x\in \mathbb{R}} e^{-2\beta |x|} \left|\int_{0}^{t} \left(\int_{U}\left(P_{t-s}G(a,x,u_{s}^{0}(x))\right)^{2}\lambda(da)\right)^{1/2} \left(\int_{U}|h_{s}(a)-k_{s}(a)|^{2}\lambda(da)\right)^{1/2}ds\right|^{2}\nonumber\\
&\leq& t\sup_{x\in \mathbb{R}} e^{-2\beta |x|} \int_{0}^{t} \int_{\mathbb{R}} p_{t-s}^{2}(x-y)e^{2\beta |y|}dy \int_{\mathbb{R}}(1+|u^{0}_{s}(y)|^{2})e^{-2\beta |y|}dy \int_{U} |h_{s}(a)-k_{s}(a)|^{2}\lambda(da)ds.\nonumber
\end{eqnarray}
Observe that,
\begin{eqnarray}\label{pinequality}
\int_{0}^{t}\int_{\mathbb{R}} p_{t-s}^{2}(x-y)e^{2\beta |y|}dyds &=& \int_{0}^{t}\int_{\mathbb{R}} \frac{1}{2\sqrt{\pi (t-s)}}p_{\frac{t-s}{2}}(x-y)e^{2\beta |y|}dyds\nonumber\\
&\leq& \tilde{K}e^{2\beta |x|} \int_{0}^{t} \frac{1}{\sqrt{t-s}}ds= \tilde{K}\sqrt{t} e^{2\beta |x|}.
\end{eqnarray}
Thus, using \eqref{u0con} with the convention that $\beta_{0}<\beta$, we obtain,
\begin{equation*}
\|S_{t}(h_{s},x)-S_{t}(k_{s},x)\|_{\beta}^{2} \leq t^{3/2} \tilde{K} K_{2} \int_{U} \sup_{0\leq s\leq t} |h_{s}(a)-k_{s}(a)|^{2}\lambda(da),
\end{equation*}
where, by noting the domain $L^{2}([0,1]\times U, ds\lambda(da))$ of $h_{s}$ and $k_{s}$, we may apply the dominated convergence theorem to obtain the result.

\end{proof}

As shown in \cite{Nualart}, by an application of Girsanov's transformation theorem, to obtain inequality \eqref{maininequality}, it is sufficient to prove for all $h_{s}\in L^{2}\left([0,1]\times U, ds\lambda(da)\right)$, $R, \rho >0$, there exist $\eta>0, \varepsilon_{0}\in (0,1)$, such that for all $\varepsilon \in (0,\varepsilon_{0})$,
\begin{equation}\label{mainestimate}
P\left(\|Y^{\varepsilon}_{t}-S_{t}(h,y)\|_{\beta}>\rho, \left\|\frac{1}{\sqrt{2\log \log \frac{1}{\varepsilon}}}W\right\|_{\infty}<\eta \right)\leq \exp\left(-2R\log \log \frac{1}{\varepsilon}\right),
\end{equation}
with,
\begin{eqnarray}\label{Yepsilon}
Y_{t}^{\varepsilon}(y)&=& \frac{1}{\sqrt{2 \log \log \frac{1}{\varepsilon}}} \int_{0}^{t}\int_{U}G_{s}^{\varepsilon}(a,y,Y_{s}^{\varepsilon}(y))W(dads)+ \int_{0}^{t}\frac{1}{2}\Delta Y_{s}^{\varepsilon}(y)ds\nonumber \\
&&+ \int_{0}^{t}\int_{U}G_{s}^{\varepsilon}(a,y,Y_{s}^{\varepsilon}(y))h_{s}(a)\lambda(da)ds,
\end{eqnarray}
where the well-posedness of $Y^{\varepsilon}_{t}(y)$ may be verified following similar reasoning as in the proof of the well-posedness of $u_{t}^{\varepsilon}(y)$ in \cite{J.Xiong}. For our estimates, we use the following lemma, the proof of which is very similar to that of Lemma 1 in \cite{moderate} and it is thus omitted.

\begin{lemma}\label{Ylemma}
Suppose $Y_{t}^{\varepsilon}(y)$ is the unique solution to SPDE \eqref{Yepsilon}, then for every $p\geq 1, \varepsilon>0$ and $T>0$, there exists a positive constant $M_{1}$ such that,
\begin{equation}\label{M1}
\sup_{\varepsilon>0} \mathbb{E}\left(\sup_{0\leq t\leq T} \int_{\mathbb{R}}Y_{t}^{\varepsilon}(x)^{2}e^{-2\beta |x|}dx\right)^{p}\leq M_{1}.
\end{equation}
\end{lemma}

In order to obtain \eqref{mainestimate}, we apply a time discretization of $Y^{\varepsilon}_{t}$. For $n\in \mathbb{N}$, $i=0,1,...,n$, let $\Delta_{i}^{n}= \left[t_{i}^{n},t_{i+1}^{n}\right)$, where $t_{i}^{n}=iT/n$, then by the following two estimates we can achieve inequality \eqref{mainestimate}.
\begin{equation}\label{firstestimate}
P\left(\|Y^{\varepsilon}_{t}-Y^{\varepsilon}_{t_{i}^{n}}\|_{\beta}>\mu\right)\leq \exp\left(-2R\log \log \frac{1}{\varepsilon}\right),
\end{equation}
\begin{equation}\label{secondestimate}
P\left(\|Y^{\varepsilon}_{t}-S_{t}(h,y)\|_{\beta}>\rho, \left\|\frac{1}{\sqrt{2 \log \log \frac{1}{\varepsilon}}}W\right\|_{\infty}<\eta, \|Y^{\varepsilon}_{t}-Y^{\varepsilon}_{t_{i}^{n}}\|_{\beta}\leq \mu\right)
\leq \exp\left(-2R\log \log \frac{1}{\varepsilon}\right).
\end{equation}

\begin{lemma}\label{lemma1}
For all $R>0, \mu>0$ there exists $n_{0}\in \mathbb{N}$ such that for all $n\geq n_{0}$, and $\varepsilon \in (0,1)$,
\begin{equation}\label{lemma1r}
P\left(\|Y^{\varepsilon}_{t}-Y^{\varepsilon}_{t_{i}^{n}}\|_{\beta} >\mu \right)\leq \exp \left(-2R \log \log \frac{1}{\varepsilon}\right).
\end{equation}
\end{lemma}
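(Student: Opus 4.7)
The plan is to adapt the time-discretization scheme underlying Nualart's Theorem \ref{Rlemma} to our SPDE setting by using the mild formulation of $Y^{\epsilon}$. For $t \in \Delta_i^n$, write
\begin{equation*}
Y_t^\epsilon - Y_{t_i^n}^\epsilon = M_1^\epsilon(t) + M_2^\epsilon(t) + D_1^\epsilon(t) + D_2^\epsilon(t),
\end{equation*}
where $M_1^\epsilon$ is the $W$-stochastic integral over $[t_i^n, t]$ of $P_{t-s}G_s^\epsilon$, $M_2^\epsilon$ is the $W$-stochastic integral over $[0, t_i^n]$ of the semigroup difference $(P_{t-s} - P_{t_i^n-s})G_s^\epsilon$, both prefactored by $(2\log\log(1/\epsilon))^{-1/2}$, and $D_1^\epsilon, D_2^\epsilon$ are the analogous deterministic terms arising from the controlled $h$-drift.

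First I would handle $D_1^\epsilon, D_2^\epsilon$. Applying Cauchy--Schwarz using $h \in L^2([0,1]\times U, ds\lambda(da))$, together with the growth bound \eqref{condition2} on $G_s^\epsilon$, the moment estimate Lemma \ref{Ylemma}, and semigroup-increment manipulations identical to those bounding $J_2$ in the proof of Theorem \ref{tight}, yields a deterministic $\mathbb{B}_\beta$-norm bound of order $n^{-\alpha}$ for some $\alpha \in (0, 1/4)$. This can be made smaller than $\mu/2$ by taking $n \geq n_0(\mu)$, uniformly in $\epsilon \in (0,1]$.

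Next, for the stochastic pieces $M_1^\epsilon, M_2^\epsilon$, I would apply the exponential inequality Theorem \ref{exponentialtheorem}. Hypothesis \eqref{min} is verified by reusing the H\"older-in-time semigroup bounds already established for $J_2$ in Theorem \ref{tight}; the resulting random variable $\xi$ is controlled by $\sup_{s \leq 1} \|Y_s^\epsilon\|_{\chi_0}^2$, whose moments are uniformly bounded in $\epsilon$ by Lemma \ref{Ylemma}, allowing us to localize onto the event $\{\xi \leq C_\sigma\}$ for a fixed large $C_\sigma$ and absorb the complementary event via Markov's inequality. The crucial prefactor $(2\log\log(1/\epsilon))^{-1/2}$ contributes a multiplicative factor of $2\log\log(1/\epsilon)$ inside the exponent of \eqref{theorem2r}, producing a bound of the form $\exp(-c \mu^2\, \log\log(1/\epsilon) / (|\Delta_i^n|^\alpha C_\sigma + |\Delta_i^n| K_\sigma^2))$; choosing $n$ large enough that this exponent dominates $-2R \log\log(1/\epsilon)$ closes the estimate.

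The main obstacle is transferring Theorem \ref{exponentialtheorem}, which is essentially a pointwise-in-$y$ statement for a real-valued stochastic integral, to a uniform bound in the $\mathbb{B}_\beta$-norm required for $\|\cdot\|_\infty$. The intended remedy is a chaining argument over a countable dense subset of $\mathbb{R}$, using the H\"older-in-$y$ regularity of the Brownian semigroup applied to $G_s^\epsilon$ (analogous to the spatial H\"older estimates implicit in the $\mathbb{B}_\beta$-valuedness of $Y^\epsilon$) together with the weight $e^{-\beta|y|}$. This chaining step produces at most a polynomial loss in the exponential bound, which is absorbed by the $\log\log(1/\epsilon)$ scaling for all sufficiently small $\epsilon$, yielding \eqref{lemma1r}.
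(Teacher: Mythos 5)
Your proposal follows essentially the same route as the paper: the same four-term mild-solution decomposition on each $\Delta_{i}^{n}$ (stochastic integral over $[t_{i}^{n},t]$, stochastic integral of the kernel increment $\Delta p(t,t_{i}^{n})$ over $[0,t_{i}^{n}]$, and the two $h$-drift terms, the latter dismissed as negligible for large $n$), followed by the Nualart--Rovira exponential inequality (Theorem \ref{exponentialtheorem}) applied to the two stochastic pieces, with its adaptedness, boundedness and H\"older-in-$t$ hypotheses verified exactly as you indicate via Lemma \ref{Ylemma} and the $J_{2}$-type semigroup estimates from Theorem \ref{tight}, the prefactor $1/\sqrt{2\log\log(1/\epsilon)}$ supplying the factor $\log\log(1/\epsilon)$ in the exponent and the choice of large $n$ making it dominate $2R\log\log(1/\epsilon)$. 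The only substantive difference is your added chaining step to pass from the pointwise-in-$y$ exponential bound to the $\mathbb{B}_{\beta}$-uniform norm, a point the paper glosses over by applying the real-valued inequality directly; this is a refinement of, not a departure from, the paper's argument.
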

\begin{proof}
For $n\in \mathbb{N}$, let $t\in \Delta_{i}^{n}$ and denote for $0<t_{1}<t_{2}$,
\begin{equation}\label{deltap}
\Delta p(t_{2},t_{1}):= p_{t_{2}-s}(x-y)-p_{t_{1}-s}(x-y).
\end{equation}
Then we have,
\begin{eqnarray*}
&&\sup_{0\leq t\leq 1}\sup_{y\in \mathbb{R}} e^{-\beta |y|}|Y^{\varepsilon}_{t}(y)-Y^{\varepsilon}_{t_{i}^{n}}(y)|\\
&\leq& \sup_{0\leq t\leq 1}\left|\sup_{y\in \mathbb{R}} \frac{e^{-\beta |y|}}{\sqrt{2\log \log \frac{1}{\varepsilon}}}\int_{t_{i}^{n}}^{t}\int_{U}P_{t-s}G_{s}^{\varepsilon}(a,y,Y_{s}^{\varepsilon}(y))W(dads)\right|\\
&&+ \sup_{0\leq t\leq 1}\left|\sup_{y\in \mathbb{R}}\frac{ e^{-\beta |y|}}{\sqrt{2 \log \log \frac{1}{\varepsilon}}} \int_{0}^{t_{i}^{n}}\int_{U} \int_{\mathbb{R}} \Delta p\left(t,t_{i}^{n}\right)G_{s}^{\varepsilon}(a,x,Y_{s}^{\varepsilon}(x))dxW(dads)\right|\\
&&+ \sup_{0\leq t\leq 1}\sup_{y\in \mathbb{R}} e^{-\beta |y|}\left|\int_{t_{i}^{n}}^{t}\int_{U} P_{t-s} G_s^{\varepsilon}(a,y,Y_{s}^{\varepsilon}(y))h_{s}(a)\lambda(da)ds \right|\\
&&+ \sup_{0\leq t\leq 1}\sup_{y\in \mathbb{R}} e^{-\beta |y|}\left|\int_{0}^{t_{i}^{n}}\int_{U}\int_{\mathbb{R}}\Delta p(t,t_{i}^{n}) G_s^{\varepsilon}(a,x,Y_{s}^{\varepsilon}(x))h_{s}(a)dx\lambda(da)ds\right|\\
&=& I_{1}+I_{2}+I_{3}+I_{4},
\end{eqnarray*}
leading to,
\begin{equation*}
P\left(\sup_{0\leq t\leq 1}\|Y_{t}^{\varepsilon}-Y_{t_{i}^{n}}^{\varepsilon}\|_{\beta}>\mu\right)
\leq  \sum_{i=1}^{n}\sum_{j=1}^{4}P\left(\sup_{t \in \Delta_{i}^{n}}I_{j}(t)>\frac{\mu}{4}\right).
\end{equation*}
Similar to estimates in \eqref{Sinequality} and noting the domain $L^{2}\left([0,1]\times U, ds\lambda(da)\right)$ of $h_{s}(a)$, we have by Lemma 2,
\begin{eqnarray*}
P\left(\sup_{t\in \Delta_{i}^{n}}I_{3}(t)>\frac{\mu}{4}\right)&\leq& \frac{16}{\mu^{2}}\mathbb{E}\sup_{t\in \Delta_{i}^{n}}\left|I_{3}(t)\right|^{2}\leq M_{1}\tilde{K}_{1}\frac{16}{\mu^{2}}\sup_{t\in \Delta_{i}^{n}}\left|t-t_{i}^{n}\right|^{2},\\
\text{and  } P\left(\sup_{t\in \Delta_{i}^{n}}I_{4}(t)>\frac{\mu}{4}\right)&\leq& \frac{16}{\mu^{2}}\mathbb{E}\sup_{t\in \Delta_{i}^{n}}\left|I_{4}(t)\right|^{2}\leq M_{1}\tilde{K}_{2}\frac{16}{\mu^{2}}\sup_{t\in \Delta_{i}^{n}}\left|t-t_{i}^{n}\right|^{2}.
\end{eqnarray*}
Then for any fixed $R>0, \varepsilon \in (0,1)$, there exists $n_{0}$ such that for all $n\geq n_{0}$,
\begin{equation*}
P\left(\sup_{t\in \Delta_{i}^{n}}I_{3}(t)>\frac{\mu}{4}\right)+ P\left(\sup_{t\in \Delta_{i}^{n}}I_{4}(t)>\frac{\mu}{4}\right)\leq \tilde{K}_{3}\frac{16}{\mu^{2}}\sup_{t\in \Delta_{i}^{n}}\left|t-t_{i}^{n}\right|^{2}\leq \exp\left(-2R\log \log \frac{1}{\varepsilon}\right).
\end{equation*}
 Continuing, we obtain,
\begin{flalign*}
&P\left(\sup_{0\leq t\leq 1}\|Y^{\varepsilon}_{t}-Y^{\varepsilon}_{t_{i}^{n}}\|_{\beta}>\mu\right)\\
&\leq \sum_{i=1}^{n} \left(P\left(\sup_{t \in \Delta_{i}^{n}}I_{1}>\frac{\mu}{4}\right)+ P\left(\sup_{t \in \Delta_{i}^{n}}I_{2}>\frac{\mu}{4}\right)\right)\\
&\leq  \sum_{i=1}^{n}\left[P\left(\sup_{t\in \Delta_{i}^{n}} \left|\sup_{y\in \mathbb{R}}\frac{e^{-\beta |y|}}{\sqrt{2\log \log \frac{1}{\varepsilon}}}\int_{t_{i}^{n}}^{t}\int_{U}
P_{t-s}G_{s}^{\varepsilon}(a,y,Y_{s}^{\varepsilon}(y))W(dads)\right|>\frac{\mu}{4}\right)\right.\\
& \left.+ P\left(\sup_{t\in \Delta_{i}^{n}} \left|\sup_{y\in \mathbb{R}}\frac{e^{-\beta |y|}}{\sqrt{2\log \log \frac{1}{\varepsilon}}}\int_{0}^{t_{i}^{n}}\int_{U}\int_{\mathbb{R}}\Delta p(t,t_{i}^{n})
G_{s}^{\varepsilon}(a,x,Y_{s}^{\varepsilon}(x))dxW(dads)\right|>\frac{\mu}{4}\right)\right]\\
&= \sum_{i=1}^{n}P\left(\sup_{t\in \Delta_{i}^{n}}J_{1}^{i}(t)>\frac{\mu\sqrt{2\log\log\frac{1}{\varepsilon}}}{4}\right)+ \sum_{i=1}^{n} P\left(\sup_{t\in \Delta_{i}^{n}}J_{2}^{i}(t)>\frac{\mu \sqrt{2\log\log \frac{1}{\varepsilon}}}{4}\right).
\end{flalign*}
Similar to \eqref{Sinequality} and \eqref{pinequality} and by Burkholder-Davis-Gundy inequality, \eqref{condition2} and Lemma 2,
\begin{eqnarray*}
\mathbb{E}\sup_{t\in \Delta_{i}^{n}} |J_{1}^{i}|^{2}&=& \mathbb{E} \sup_{t\in \Delta_{i}^{n}} \sup_{y\in \mathbb{R}}e^{-2\beta |y|} \left|\int_{t_{i}^{n}}^{t} \int_{U} P_{t-s}G_{s}^{\varepsilon}(a,y,Y_{s}^{\varepsilon}(y))W(dads)\right|^{2}\\
&\leq& \mathbb{E} \sup_{t\in \Delta_{i}^{n}} \sup_{y} e^{-2\beta |y|} \int_{t_{i}^{n}}^{t} \int_{U} \left(\int_{\mathbb{R}} p_{t-s}(x-y) G_{s}^{\varepsilon}(a,x,Y_{s}^{\varepsilon}(x))dx\right)^{2}\lambda(da)ds\\
&\leq& \mathbb{E}\sup_{t\in \Delta_{i}^{n}} \sup_{y} e^{-2\beta |y|} \int_{t_{i}^{n}}^{t} \int_{U}\int_{\mathbb{R}} p_{t-s}(x-y)^{2} e^{2\beta |x|}dx \int_{\mathbb{R}} G_{s}^{\varepsilon}(a,x,Y_{s}^{\varepsilon}(x))^{2}e^{-2\beta |x|} dx \lambda(da)ds\\
&\leq& K_{4}\tilde{K}_{4} \mathbb{E}\sup_{t\in \Delta_{i}^{n}} |t-t_{i}^{n}|^{1/2} \int_{\mathbb{R}} \left(1+ (2\varepsilon \log \log \frac{1}{\varepsilon})\sup_{t_{i}^{n}\leq s\leq t} Y_{s}^{\varepsilon}(x)^{2} + e^{2\beta_{0}|x|}\right)e^{-2\beta |x|}dx\\
&\leq& \tilde{K}_{5} M_{1} |t-t_{i}^{n}|^{1/2}.
\end{eqnarray*}
Following the same steps as above we find,
\begin{equation*}
\mathbb{E}\sup_{t\in \Delta_{i}^{n}}|J_{2}(t)|^{2} \leq \tilde{K}_{6} M_{1} |t_{i}^{n}|^{1/2}.
\end{equation*}
Notice that to obtain \eqref{lemma1r}, it is sufficient to prove,
\begin{equation}\label{k}
\sum_{i=1}^{n}\mathbb{E}\exp\left(\sup_{t\in \Delta_{i}^{n}}|J_{1}^{i}(t)|^{2}\right) + \sum_{i=1}^{n}\mathbb{E}\exp\left(\sup_{t\in \Delta_{i}^{n}}|J_{2}^{i}(t)|^{2}\right)\leq \tilde{C},
\end{equation}
for some positive constant, $\tilde{C}$. Inspired by the proof of Theorem 3.2 in \cite{Cerrai}, we write the left hand side of \eqref{k} as,
\begin{equation*}
\sum_{i=1}^{n}\mathbb{E} \lim_{k\rightarrow \infty} \sum_{p=0}^{k} \frac{1}{p!}\sup_{t\in \Delta_{i}^{n}}|J_{1}^{i}(t)|^{2p} + \sum_{i=1}^{n}\mathbb{E} \lim_{k\rightarrow \infty}\sum_{p=0}^{k} \frac{1}{p!} \sup_{t\in \Delta_{i}^{n}}|J_{2}^{i}(t)|^{2p}.
\end{equation*}
Then observing that \eqref{M1} holds for all $p\geq 1$, we may apply the Monotone convergence theorem to arrive at \eqref{k} and since the above estimates hold for any $\mu>0$, we obtain \eqref{lemma1r}.

\end{proof}

\begin{lemma}
For all $R>0, \rho>0, n\in \mathbb{N}$, there exist $\mu_{0}, \eta_{0}>0$ such that for all $\mu\leq \mu_{0}, \eta\leq \eta_{0}$, and $\varepsilon \in (0,1)$,
\begin{equation}\label{2ndequation}
P\left(\left\|Y^{\varepsilon}_{t}-S_{t}(h,y)\right\|_{\beta}>\rho, \left\|\frac{1}{\sqrt{2 \log \log \frac{1}{\varepsilon}}}W\right\|_{\infty}<\eta, \|Y^{\varepsilon}_{t}-Y^{\varepsilon}_{t_{i}^{n}}\|_{\beta}\leq \mu\right)
\leq \exp\left(-2R \log \log \frac{1}{\varepsilon}\right).
\end{equation}
\end{lemma}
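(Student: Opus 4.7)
The plan is to exploit the hypothesis $\|Y^\epsilon - Y^\epsilon_{t_i^n}\|_\infty \le \mu$ to freeze the coefficient $G_s^\epsilon(a,y,Y_s^\epsilon(y))$ at its grid value, reducing the stochastic integral to one with a piecewise-constant integrand that can be controlled directly by the bound $\|(2\log\log(1/\epsilon))^{-1/2}W\|_\infty<\eta$ after an integration by parts in $s$. From the mild formulations of \eqref{Yepsilon} and \eqref{controlled}, I write
\begin{align*}
Y_t^\epsilon(y) - S_t(h,y) &= \frac{1}{\sqrt{2\log\log(1/\epsilon)}}\int_0^t\!\!\int_U P_{t-s}G_s^\epsilon(a,y,Y_s^\epsilon(y))\,W(dads)\\
&\quad+\int_0^t\!\!\int_U P_{t-s}\bigl[G_s^\epsilon(a,y,Y_s^\epsilon(y))-G(a,y,u_s^0(y))\bigr]h_s(a)\lambda(da)\,ds.
\end{align*}

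The drift term is split via $G_s^\epsilon(\cdot,Y_s^\epsilon)-G(\cdot,u_s^0)=[G_s^\epsilon(\cdot,Y_s^\epsilon)-G_s^\epsilon(\cdot,S_s(h,\cdot))]+[G_s^\epsilon(\cdot,S_s(h,\cdot))-G(\cdot,u_s^0)]$. By \eqref{condition1} the first summand contributes $K\sqrt{2\epsilon\log\log(1/\epsilon)}\int_0^t\|Y_s^\epsilon-S_s(h,\cdot)\|_\beta\,ds$, a Gronwall-ready term, while the second is of order $(2\epsilon\log\log(1/\epsilon))^{1/4}\to 0$ since the shift $\sqrt{2\epsilon\log\log(1/\epsilon)}Z_s^\epsilon$ inside $G_s^\epsilon$ vanishes.

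For the stochastic integral, I replace $Y_s^\epsilon$ by $Y_{t_i^n}^\epsilon$ on each $\Delta_i^n$. The discrepancy is a stochastic convolution with integrand of $L^2(U,\lambda)$-norm at most $K\sqrt{2\epsilon\log\log(1/\epsilon)}\mu$ by \eqref{condition1}; its sup norm is bounded by $C\mu^{1/2}$ outside an exceptional set of probability at most $\exp(-2R\log\log(1/\epsilon))$, via Theorem~\ref{exponentialtheorem} applied exactly as in Lemma~\ref{lemma1}. The frozen-coefficient integral splits block-by-block into terms that after integration by parts in $s$ become $P_{t-s}G_s^\epsilon(a,y,Y_{t_i^n}^\epsilon(y))\cdot(2\log\log(1/\epsilon))^{-1/2}W$ evaluated at the endpoints, plus a boundary correction of the form $\int_{\Delta_i^n}\partial_s P_{t-s}G_s^\epsilon\cdot W\,ds$. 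On the event $\|(2\log\log(1/\epsilon))^{-1/2}W\|_\infty<\eta$, both are controlled using the kernel estimates from the $J_2$ calculation in Theorem~\ref{tight}, summing over $i=1,\dots,n$ to at most $Cn\eta$, which is forced below $\rho/2$ by choosing $\eta\le\eta_0(n,\rho)$ small.

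Combining these bounds and closing by Gronwall against the Lipschitz summand yields $\|Y_t^\epsilon-S_t(h,y)\|_\beta\le 2\rho$ on the prescribed event outside an exceptional set of probability $\le 2\exp(-2R\log\log(1/\epsilon))$. The main technical obstacle is the integration-by-parts step for the frozen stochastic integral: unlike the SDE case in Proposition~5 of \cite{Nualart} where the block contribution reduces to $G\cdot W(\Delta_i^n)$, the heat semigroup $P_{t-s}$ produces a nonvanishing $\partial_s P_{t-s}$ term whose contribution must be estimated with the Gaussian kernel bounds derived in the proof of Theorem~\ref{tight}. Once this step is in hand the remainder is routine Lipschitz/Gronwall bookkeeping together with the same exponential-tail arguments already deployed in Lemma~\ref{lemma1}.
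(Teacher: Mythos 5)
Your overall skeleton (discretize in time, control the increment part on the event $\|Y^\epsilon-Y^\epsilon_{t_i^n}\|_\infty\le\mu$ by the exponential inequality, dispose of the drift separately) is in the spirit of the paper, and your treatment of the discrepancy term indeed matches the paper's $P_1$. But two steps have genuine gaps. First, the drift: you read \eqref{condition1} as a Lipschitz bound, whereas it controls the \emph{squared} $L^2(\lambda)$-norm of the increment by $K\sqrt{2\epsilon\log\log(1/\epsilon)}\,|Z_1-Z_2|$; after the Cauchy--Schwarz pairing with $h_s$ the drift difference between $Y^\epsilon_s$ and $S_s(h,\cdot)$ is of order $(\epsilon\log\log(1/\epsilon))^{1/4}\|Y^\epsilon_s-S_s\|^{1/2}$, not $\sqrt{2\epsilon\log\log(1/\epsilon)}\,\|Y^\epsilon_s-S_s\|$, so the linear Gronwall closure you invoke at the end is not available as stated (at best a Bihari-type sublinear version, and you would be running a pathwise Gronwall against terms you only control off an exceptional set). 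The paper does not use Gronwall at all: it bounds the full drift difference $\int_0^t\int_U P_{t-s}\bigl(G^\epsilon_s(a,y,Y^\epsilon_s)-G^\epsilon_s(a,y,0)\bigr)h_s(a)\lambda(da)\,ds$, whose square is at most $K\,(2\epsilon\log\log(1/\epsilon))\sup_s\int_{\mathbb{R}}|Y^\epsilon_s(x)|e^{-2\beta|x|}dx$ by \eqref{condition1} and Cauchy--Schwarz, and then removes it by Markov's inequality together with Lemma \ref{Ylemma}, using $C\epsilon\log\log(1/\epsilon)\le\exp(-2R\log\log(1/\epsilon))$ for $\epsilon$ small.

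Second, and more seriously, the frozen-coefficient stochastic convolution: the integration-by-parts step you single out as the main obstacle does not go through as sketched. The boundary correction is $\int_{\Delta_i^n}\partial_s\bigl(P_{t-s}\bigr)G^\epsilon_s\cdot W\,ds$ with $\partial_s P_{t-s}=-\tfrac12\Delta P_{t-s}$ of order $(t-s)^{-1}$; with only the sup bound $\|W\|_\infty<\eta\sqrt{2\log\log(1/\epsilon)}$ this is not absolutely integrable up to $s=t$, and the $J_{2}$ estimates in the proof of Theorem \ref{tight} are moment bounds on $\Delta p(t_2,t_1)$ (in $L^2(\Omega)$, under the stochastic integral), not pathwise bounds that tame this singularity. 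In addition, the integrand depends on $a\in U$ and $W$ is a space-time white noise, so bounding a block integral by $\|W\|_\infty$ requires variation control in $a$ that the standing assumptions \eqref{con1}--\eqref{con2} do not supply. The paper avoids integration by parts entirely: after splitting off the increment part $G^\epsilon_s(a,y,Y^\epsilon_s)-G^\epsilon_s(a,y,Y^\epsilon_{s_i^n})$ (handled, as you propose, by Theorem \ref{exponentialtheorem}, with $K_\sigma$ small thanks to $\mu$), it applies Theorem \ref{exponentialtheorem} a second time to each frozen block integral with threshold $\rho/(2n)$ and sums the exponential bounds over the $n$ blocks. Unless you supply a genuinely new estimate for the $\partial_s P_{t-s}$ correction (and for the $a$-dependence), the frozen part of your argument fails, and this is precisely the point where your route diverges from, and is weaker than, the paper's.
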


\begin{proof}
For the simplicity of notation, we let,
\begin{equation*}
\Delta G_{s}^{\varepsilon}(v(x),w(x)):= G_{s}^{\varepsilon}(a,x,v(x))-G_{s}^{\varepsilon}(a,x,w(x)).
\end{equation*}

By the uniqueness of solutions of $S_{t}(h,y)$, we use its mild form and obtain,
\begin{eqnarray*}
\|Y_{t}^{\varepsilon}-S_{t}(h,y)\|_{\beta} &\leq& \sup_{y\in \mathbb{R}}\frac{e^{-\beta |y|}}{\sqrt{2\log \log \frac{1}{\varepsilon}}}\int_{0}^{t}\int_{U} P_{t-s}G_{s}^{\varepsilon}(a,y,Y_{s}^{\varepsilon}(y))W(dads) \\
&&+ \sup_{y\in \mathbb{R}} e^{-\beta |y|} \int_{0}^{t} \int_{U}P_{t-s} \Delta G_{s}^{\varepsilon}(Y_{s}^{\varepsilon}(y),0)h_{s}(a)\lambda(da)ds\\
&=& \frac{B_{1}^{\varepsilon}(t)}{\sqrt{2\log\log\frac{1}{\varepsilon}}} + B_{2}^{\varepsilon}(t).
\end{eqnarray*}
Thus, we may write,
\begin{eqnarray*}
&&P\left(\|Y_{t}^{\varepsilon}-S_{t}(h,y)\|_{\beta} >\rho, \left\|\frac{1}{\sqrt{2\log \log \frac{1}{\varepsilon}}} W\right\|_{\infty} <\eta, \|Y_{t}^{\varepsilon}-Y^{\varepsilon}_{t_{i}^{n}}\|_{\beta} \leq \mu\right)\\
&\leq& P\left(B_{1}^{\varepsilon}(t) >\frac{\rho \sqrt{2\log \log \frac{1}{\varepsilon}}}{2}, \left\|\frac{1}{\sqrt{2\log \log \frac{1}{\varepsilon}}} W\right\|_{\infty} <\eta\right) \\
&&+ P\left(B_{2}^{\varepsilon}(t)> \frac{\rho}{2}, \|Y_{t}^{\varepsilon}-Y^{\varepsilon}_{t_{i}^{n}}\|_{\beta} \leq \mu\right)\\
&=& I_{1}+I_{2}.
\end{eqnarray*}
By the same reasoning as in the proof of Lemma 3, there exists a constant $C>0$ such that $\mathbb{E}(\exp(B_{1}^{\varepsilon}(t)^{2}))\leq C$, which yields,
\begin{eqnarray}\label{B1}
I_{1} &=& P\left(\exp(B_{1}^{\varepsilon}(t)^{2}) > \exp\left(\frac{\rho^{2}(2\log \log \frac{1}{\varepsilon})}{4}\right),  \left\|\frac{1}{\sqrt{2\log \log \frac{1}{\varepsilon}}} W\right\|_{\infty} <\eta\right)\nonumber\\
&\leq& C \exp\left(\frac{-\rho^{2}(2\log \log \frac{1}{\varepsilon})}{4}\right)\leq \exp(-2R\log \log \frac{1}{\varepsilon}),
\end{eqnarray}
since the first inequality in \eqref{B1} is true for any $\rho>0$. As for $I_{2}$, noting that $t\in \Delta_{i}^{n}$, we have,
\begin{eqnarray*}
B_{2}^{\varepsilon}(t)&=& \sup_{y\in \mathbb{R}} e^{-\beta |y|}\int_{0}^{t_{i}^{n}}\int_{U} P_{t_{i}^{n}-s}G_{s}^{\varepsilon}(a,y,Y_{s}^{\varepsilon}(y))h_{s}(a)\lambda(da)ds\\
&&+ \sup_{y}e^{-\beta |y|}\int_{t_{i}^{n}}^{t} \int_{U} P_{t-s}G_{s}^{\varepsilon}(a,y,Y_{s}^{\varepsilon}(y))h_{s}(a)\lambda(da)ds\\
&&- \sup_{y}e^{-\beta |y|} \int_{0}^{t_{i}^{n}} P_{t_{i}^{n}-s} G_{s}^{\varepsilon}(a,y,0)h_{s}(a)\lambda(da)ds\\
&&- \sup_{y}e^{-\beta |y|}\int_{t_{i}^{n}}^{t} P_{t-s}G_{s}^{\varepsilon}(a,y,0)h_{s}(a)\lambda(da)ds,
\end{eqnarray*}
which leads to,
\begin{eqnarray*}
\sup_{t\in \Delta_{i}^{n}}B_{2}^{\varepsilon}(t)&=& \sup_{t\in \Delta_{i}^{n}}\sup_{y}e^{-\beta |y|} \int_{0}^{t_{i}^{n}} \int_{U} P_{t_{i}^{n}-s} \Delta G_{s}^{\varepsilon}(Y_{s}^{\varepsilon}(y),Y_{s_{i}^{n}}^{\varepsilon}(y))h_{s}(a)\lambda(da)ds\\
&&+ \sup_{t\in \Delta_{i}^{n}}\sup_{y} e^{-\beta |y|} \int_{0}^{t_{i}^{n}} \int_{U} P_{t_{i}^{n}-s} \Delta G_{s}^{\varepsilon}(Y_{s_{i}^{n}}^{\varepsilon}(y),0)h_{s}(a)\lambda(da)ds\\
&&+\sup_{t\in \Delta_{i}^{n}} \sup_{y} e^{-\beta |y|} \int_{t_{i}^{n}}^{t} \int_{U} P_{t-s}\Delta G_{s}^{\varepsilon}(Y_{s}^{\varepsilon}(y),0)h_{s}(a)\lambda(da)ds\\
&=& \sup_{t\in \Delta_{i}^{n}}I_{21}+\sup_{t\in \Delta_{i}^{n}}I_{22}+\sup_{t\in \Delta_{i}^{n}}I_{23}.
\end{eqnarray*}
Using \eqref{con1} and condition $\|Y_{s}^{\varepsilon}(y)-Y_{s_{i}^{n}}^{\varepsilon}(y)\|_{\beta} <\mu$, we have $\mathbb{E}\exp(|I_{21}|^{2})\leq \mu^{2}\tilde{k}_{1}|t_{i}^{n}|^{1/2}$ and we may bound $\mathbb{E}\exp(|I_{22}|^{2})$ and $\mathbb{E}\exp(|I_{23}|^{2})$ by $\tilde{k}_{2}|t_{i}^{n}|^{1/2}$ and $\tilde{k}_{3}|t-t_{i}^{n}|^{1/2}$, respectively. Then for any fixed $R>0$ and $\varepsilon \in (0,1)$ using the fact that $t_{i}^{n}:= (Ti)/n$, we may choose an $n_{0}\in \mathbb{N}$ such that for any $n\geq n_{0}$ the following inequality holds.
\begin{eqnarray*}
I_{2}&\leq& P\left(\sup_{t\in \Delta_{i}^{n}}I_{21}>\frac{\rho}{6}, \left\|Y_{t}^{\varepsilon}-Y^{\varepsilon}_{t_{i}^{n}}\right\|_{\beta} \leq \mu\right)
+P\left(\sup_{t\in \Delta_{i}^{n}}I_{22}>\frac{\rho}{6}, \left\|Y_{t}^{\varepsilon}-Y^{\varepsilon}_{t_{i}^{n}}\right\|_{\beta} \leq \mu\right)\\
&&+P\left(\sup_{t\in \Delta_{i}^{n}}I_{23}>\frac{\rho}{6}, \left\|Y_{t}^{\varepsilon}-Y^{\varepsilon}_{t_{i}^{n}}\right\|_{\beta} \leq \mu\right)\\
&\leq& \exp(-2R\log \log \frac{1}{\varepsilon}).
\end{eqnarray*}
Hence, we obtain \eqref{2ndequation}.

\end{proof}

Notice that the above estimates hold with any $a(\varepsilon)$ instead of $1/\sqrt{2\log \log (1/\varepsilon)}$ satisfying \eqref{conditions} and thus we achieve the MDP for the class of SPDEs by Azencott method, where using the rate function from Schilder's theorem and letting $\Phi(h)=S_{t}(h,y)$, we obtain \eqref{rate1}. For the Strassen's compact LIL to prove the relative compactness of $Z_{t}^{\varepsilon}(y)$, we show its tightness property by following the well established theorem stated below.

\begin{theorem}[Theorem $12.3$ in \cite{Billingsley}]\label{T12.3}
The sequence $\{X^{\varepsilon}\}_{\varepsilon>0}$ is tight in $\mathcal{C}\left([0,1];\mathbb{R}\right)$, if\\
$(i)$ the sequence $\{X^{\varepsilon}(0)\}_{\varepsilon>0}$ is tight,\\
$(ii)$ there exist constants $\gamma \geq 0$ and $\alpha >1$ and a nondecreasing, continuous function $F$ on $[0,1]$ such that \begin{equation}\label{12.3}
P\left(\left|X^{\varepsilon}(t_{2})-X^{\varepsilon}(t_{1})\right|\geq \lambda\right) \leq \frac{1}{\lambda^{\gamma}}\left|F(t_{2})-F(t_{1})\right|^{\alpha},
\end{equation}
holds for all $t_{1},t_{2}$ and $n$ and all positive $\lambda$.
\end{theorem}
We need the following analogous result to Lemma 2 for the process $\{Z^{\varepsilon}_{t}(y)\}_{0<\varepsilon<1}$, where its proof is omitted due to its similarity with the proof of Lemma 1 in \cite{moderate}.

\begin{lemma}\label{Mlemma}
Let $Z_{t}^{\varepsilon}(y)$ be the unique solution to SPDE \eqref{SPDE}, then for any $p\geq 1, 0<\varepsilon<1$ and $T>0$, there exists a positive constant $M_{2}$ such that,
\begin{equation}\label{Mequation}
\sup_{\varepsilon >0}\mathbb{E}\left(\sup_{0\leq t \leq T} \int_{\mathbb{R}} Z_{t}^{\varepsilon}(x)^{2}e^{-2\beta|x|}dx\right)^{p}\leq M_{2}.
\end{equation}
\end{lemma}

\begin{theorem}\label{tight}
 Family, $\{Z_{t}^{\varepsilon}\}_{0<\varepsilon<1}$ takes values in $\mathcal{C}\left([0,1];\mathbb{B}_{\beta}\right)$ and is tight.
\end{theorem}

\begin{proof}
 It was shown in Lemma 3 of \cite{moderate} that $v_{t}^{\varepsilon}(y)$ defined by \eqref{centered} takes values in $\mathcal{C}\left([0,1];\mathbb{B}_{\beta}\right)$ and its solution is unique allowing us to use the mild solution,
 \begin{equation}
Z_{t}^{\varepsilon}(y):= \frac{1}{\sqrt{2\log \log \frac{1}{\varepsilon}}}\int_{0}^{t}\int_{U}P_{t-s}G_{s}^{\varepsilon}(a,y,Z_{s}^{\varepsilon}(y))W(dads),
\end{equation}
where $P_{t-s}$ is the Brownian semigroup. Let $0<\varepsilon<1$ and $t_{1},t_{2}\in [0,1]$ be arbitrary with $t_{1}<t_{2}$. For $n>8$, we proceed as follows,
\begin{eqnarray*}
&&\mathbb{E}\left\|Z_{t_{2}}^{\varepsilon}(x)-Z_{t_{1}}^{\varepsilon}(x)\right\|_{\beta}^{n}\\
&=& \mathbb{E}\sup_{x\in \mathbb{R}}e^{-n\beta |x|}\left|\frac{1}{\sqrt{2\log \log \frac{1}{\varepsilon}}} \int_{0}^{t_{2}}\int_{U}\int_{\mathbb{R}}
\frac{1}{\sqrt{2\pi \left(t_{2}-s\right)}} e^{-\frac{|x-y|^{2}}{2\left(t_{2}-s\right)}}G_{s}^{\varepsilon}\left(a,y,Z_{s}^{\varepsilon}(y)\right)dyW(dads)\right.\\
&&\left.- \frac{1}{\sqrt{2\log \log \frac{1}{\varepsilon}}} \int_{0}^{t_{1}} \int_{U}\int_{\mathbb{R}} \frac{1}{\sqrt{2\pi \left(t_{1}-s\right)}} e^{-\frac{|x-y|^{2}}{2\left(t_{1}-s\right)}} G_{s}^{\varepsilon}\left(a,y,Z_{s}^{\varepsilon}(y)\right)dyW(dads)\right|^{n}.
\end{eqnarray*}
For better presentation, let $K_{0}:= \frac{1}{\sqrt{2 \log \log \frac{1}{\varepsilon}} \sqrt{2\pi}}$ and $\tilde{G}_{s}^{\varepsilon}(a,y):= G_{s}^{\varepsilon}\left(a,y,Z_{s}^{\varepsilon}(y)\right)$. We will call the first integral above $I(t_{2},t_{2})(x)$, where the first $t_{2}$ appears in the upper limit of the integral and the second is the time parameter in the Gaussian density. Similarly, the second integral is denoted as $I\left(t_{1},t_{1}\right)(x)$. Using this notation we have,
\begin{eqnarray*}
&&\mathbb{E}\left\|Z_{t_{2}}^{\varepsilon}(x)-Z_{t_{1}}^{\varepsilon}(x)\right\|_{\beta}^{n}
=\mathbb{E}\left\|I\left(t_{2},t_{2}\right)(x)-I\left(t_{1},t_{1}\right)(x)\right\|_{\beta}^{n}\\
&\leq&  2^{n-1}K_{0}^{n}\left[\mathbb{E}\left\|I\left(t_{2},t_{2}\right)(x)-I\left(t_{1},t_{2}\right)(x)\right\|_{\beta}^{n}+ \mathbb{E}\left\|I\left(t_{1},t_{2}\right)(x)-I\left(t_{1},t_{1}\right)(x)\right\|_{\beta}^{n}\right]\\
&\leq& 2^{n-1}\left(J_{1}+J_{2}\right).
\end{eqnarray*}
As for $J_{1}$, applying the Burkholder-Davis-Gundy inequality yields,
\begin{equation*}
J_{1}\leq K_{0}^{n}\mathbb{E}\sup_{x\in \mathbb{R}}e^{-n\beta |x|}\left|\int_{t_{1}}^{t_{2}} \int_{U}\left(\int_{\mathbb{R}} \frac{1}{\sqrt{t_{2}-s}}e^{-\frac{|x-y|^{2}}{2\left(t_{2}-s\right)}} \tilde{G}_{s}^{\varepsilon}(a,y)dy\right)^{2}\lambda(da)ds\right|^{\frac{n}{2}},
\end{equation*}
where by H\"older's inequality and condition \eqref{condition2},
\begin{eqnarray}\label{2}
&&\int_{U}\left(\int_{\mathbb{R}} \frac{1}{\sqrt{t_{2}-s}}e^{-\frac{|x-y|^{2}}{2\left(t_{2}-s\right)}}\tilde{G}_{s}^{\varepsilon}(a,y)dy\right)^{2}\lambda(da)\\
&\leq& \int_{U}\int_{\mathbb{R}} \frac{1}{t_{2}-s} e^{-\frac{|x-y|^{2}}{t_{2}-s}}e^{2\beta |y|}dy
\int_{\mathbb{R}} \tilde{G}_{s}^{\varepsilon}(a,y)^{2}e^{-2\beta|y|}dy\lambda(da)\nonumber\\
&\leq& K_{4}\int_{\mathbb{R}}\frac{1}{t_{2}-s} e^{-\frac{|x-y|^{2}}{t_{2}-s}} e^{2\beta |y|}dy \int_{\mathbb{R}}
\left(1+\left(2\varepsilon \log \log \frac{1}{\varepsilon}\right)Z_{s}^{\varepsilon}(y)^{2} + e^{2\beta_{0}|y|}\right) e^{-2\beta |y|}dy,\nonumber
\end{eqnarray}
and by our assumption, $\beta_{0}<\beta$ and $\beta>0$. Moreover, similar to \eqref{pinequality},
\begin{eqnarray*}
 \int_{t_{1}}^{t_{2}}\int_{\mathbb{R}}\frac{1}{t_{2}-s}e^{-\frac{|x-y|^{2}}{t_{2}-s}}e^{2\beta |y|}dyds
\leq k_{1}\int_{t_{1}}^{t_{2}}\int_{\mathbb{R}} p_{t_{2}-s}^{2}(x-y)e^{2\beta |y|}dy ds \leq k_{2}e^{2\beta |x|}\sqrt{t_{2}-t_{1}},
\end{eqnarray*}
therefore, using Lemma \ref{Mlemma},
\begin{eqnarray}\label{J1bound}
J_{1}&\leq & K_{0}^{n}K_{4}k_{3} \mathbb{E}\left|\sqrt{t_{2}-t_{1}}\int_{\mathbb{R}}
\left(2\varepsilon \log \log \frac{1}{\varepsilon}\right)\sup_{t_{1}\leq s\leq t_{2}}Z_{s}^{\varepsilon}(y)^{2} e^{-2\beta |y|}dy\right|^{\frac{n}{2}}\nonumber\\
&\leq& k_{4}M_{2}\left|t_{2}-t_{1}\right|^{\frac{n}{4}},
\end{eqnarray}
where $k_{4}M_{2}$ is independent of $\varepsilon$. Using notation \eqref{deltap}, we continue by estimating $J_{2}$,
\begin{eqnarray*}
J_{2}&=& K_{0}\mathbb{E}\left\|\int_{0}^{t_{1}}\int_{U}\int_{\mathbb{R}} \Delta p\left(t_{2},t_{1}\right)G_{s}^{\varepsilon}(a,y)dy W(dads)\right\|_{\beta}^{n}\\
&\leq& K_{0}^{n}\mathbb{E}\sup_{x\in \mathbb{R}}e^{-n\beta |x|}\left|\int_{0}^{t_{1}}\int_{U}\int_{\mathbb{R}}\left(\Delta p\left(t_{2},t_{1}\right)\right)^{2}e^{2\beta |y|}dy
\int_{\mathbb{R}} \tilde{G}_{s}^{\varepsilon}(a,y)^{2}e^{-2\beta |y|}dy\lambda(da)ds\right|^{\frac{n}{2}}\\
&\leq& k_{5}\sup_{x\in \mathbb{R}}e^{-n\beta |x|}\left|\int_{0}^{t_{1}} \int_{\mathbb{R}} \left(\Delta p\left(t_{2},t_{1}\right)\right)^{2}
e^{2\beta |y|}dyds \right|^{\frac{n}{2}},
\end{eqnarray*}
where steps similar to those taken for estimating $J_{1}$ were applied. It can be seen that for $0<\alpha\leq 1/2$,
\begin{eqnarray}\label{Deltap}
\left(\Delta p\left(t_{2},t_{1}\right)\right)^{2}&=&\left|p_{t_{2}-s}(x-y)-p_{t_{1}-s}(x-y)\right|^{\alpha}
\left|p_{t_{2}-s}(x-y)-p_{t_{1}-s}(x-y)\right|^{2-\alpha}\\
&\leq& 2^{1-\alpha} \left|p_{t_{2}-s}(x-y)-p_{t_{1}-s}(x-y)\right|^{\alpha}\left(p_{t_{2}-s}(x-y)^{2-\alpha}+
p_{t_{1}-s}(x-y)^{2-\alpha}\right).\nonumber
\end{eqnarray}
Also $\Delta p\left(t_{2},t_{1}\right)$ may be written as,
\begin{equation}\label{3}
 \frac{1}{\sqrt{2\pi}} \frac{1}{\sqrt{t_{2}-s}} \left|e^{-\frac{|x-y|^{2}}{2\left(t_{2}-s\right)}}
-e^{-\frac{|x-y|^{2}}{2\left(t_{1}-s\right)}}\right|+\frac{1}{\sqrt{2\pi}}
e^{-\frac{|x-y|^{2}}{2\left(t_{1}-s\right)}}\left|\frac{1}{\sqrt{t_{2}-s}}-\frac{1}{\sqrt{t_{1}-s}}\right|=: I_{1}+I_{2}.
\end{equation}
Using this form in \eqref{Deltap} we obtain,
\begin{equation*}
\left(\Delta p\left(t_{2},t_{1}\right)\right)^{2}\leq
K\left|I_{1}+I_{2}\right|^{\alpha}\left(p_{t_{2}-s}(x-y)^{2-\alpha}+p_{t_{1}-s}(x-y)^{2-\alpha}\right),
\end{equation*}
hence,
\begin{eqnarray*}
J_{2}&\leq& k_{5}\sup_{x\in \mathbb{R}}e^{-n\beta |x|}\left|\int_{0}^{t_{1}}\int_{\mathbb{R}} \left|I_{1}\right|^{\alpha}p_{t_{2}-s}(x-y)^{2-\alpha}e^{2\beta |y|}dyds+ \int_{0}^{t_{1}}\int_{\mathbb{R}}\left|I_{2}\right|^{\alpha}p_{t_{2}-s}(x-y)^{2-\alpha}e^{2\beta |y|}dyds\right.\\
&&\left.+ \int_{0}^{t_{1}}\int_{\mathbb{R}} \left|I_{1}\right|^{\alpha}p_{t_{1}-s}(x-y)^{2-\alpha}e^{2\beta |y|}dyds+\int_{0}^{t_{1}}\int_{\mathbb{R}} \left|I_{2}\right|^{\alpha}p_{t_{1}-s}(x-y)^{2-\alpha}e^{2\beta |y|}dyds\right|^{\frac{n}{2}}\\
&=& k_{5}\sup_{x\in \mathbb{R}} e^{-n\beta |x|}\left|J_{2,1}+ J_{2,2}+ J_{2,3}+ J_{2,4}\right|^{\frac{n}{2}}.
\end{eqnarray*}
 For $I_{1}$, we use the mean value theorem to obtain,
\begin{equation}\label{4}
I_{1}\leq k_{6}\frac{|x-y|^{2}}{2\sqrt{2\pi(t_{2}-s)}}\left|\frac{1}{t_{2}-s}-\frac{1}{t_{1}-s}\right|
=k_{6}\frac{|x-y|^{2}}{2\sqrt{2\pi(t_{2}-s)}}\frac{\left|t_{2}-t_{1}\right|}{\left(t_{2}-s\right)\left(t_{1}-s\right)}.
\end{equation}

In particular,
\begin{eqnarray*}
J_{2,1}&\leq& k_{6}\int_{0}^{t_{1}}\int_{\mathbb{R}} \frac{|x-y|^{2\alpha}}{2^{\alpha}\left(2\pi \left(t_{2}-s\right)\right)^{\frac{\alpha}{2}}}
 \frac{\left|t_{2}-t_{1}\right|^{\alpha}}{\left(t_{2}-s\right)^{\alpha}\left(t_{1}-s\right)^{\alpha}}
p_{t_{2}-s}(x-y)^{2-\alpha}e^{2\beta |y|}dyds\\
&\leq&k_{7} \int_{0}^{t_{1}}\int_{\mathbb{R}} \frac{\left|t_{2}-t_{1}\right|^{\alpha}}{\left(t_{2}-s\right)
^{\frac{3\alpha}{2}}\left(t_{1}-s\right)^{\alpha}}|x-y|^{2\alpha} p_{t_{2}-s}(x-y)^{2-\alpha}e^{2\beta|y|}dyds\\
&\leq&k_{7}\int_{0}^{t}\int_{\mathbb{R}}\frac{\left|t_{2}-t_{1}\right|^{\alpha}}{\left(t_{2}-s\right)
^{\frac{3\alpha}{2}}\left(t_{1}-s\right)^{\alpha}} \frac{|x-y|^{2\alpha}}{\left(t_{2}-s\right)^{1-\frac{\alpha}{2}}}\sqrt{\frac{t_{2}-s}{2-\alpha}}
p_{\frac{t_{2}-s}{2-\alpha}}(x-y)e^{2\beta |y|}dyds\\
&\leq& k_{7}\int_{0}^{t_{1}}\int_{\mathbb{R}}\frac{\left|t_{2}-t_{1}\right|^{\alpha}}
{\left(t_{2}-s\right)^{\frac{1}{2}+\alpha}\left(t_{1}-s\right)^{\alpha}}|x-y|^{2\alpha}
p_{\frac{t_{2}-s}{2-\alpha}}(x-y)e^{2\beta |y|}dyds\\
&\leq& k_{8} e^{2\beta |x|}\int_{0}^{t_{1}} \frac{\left|t_{2}-t_{1}\right|^{\alpha}}{\left(t_{2}-s\right)^{\frac{1}{2}+\alpha}
\left(t_{1}-s\right)^{\alpha}}ds.
\end{eqnarray*}
Thus, noting the assumption $t_{1}<t_{2}$, we arrive at,
\begin{equation*}
J_{2,1} \leq k_{8}e^{2\beta|x|}\left|t_{2}-t_{1}\right|^{\alpha}\int_{0}^{t_{1}}
\left(t_{1}-s\right)^{-\left(\frac{1}{2}+2\alpha\right)}ds\leq k_{9}e^{2\beta |x|}\left|t_{2}-t_{1}\right|^{\alpha},
\end{equation*}
if $2\alpha < 1/2$. Similarly for $J_{2,3}$,
\begin{equation*}
J_{2,3}\leq k_{10}e^{2\beta |x|}\left|t_{2}-t_{1}\right|^{\alpha},
\end{equation*}
if $2\alpha < 1/2$. To determine bounds for $J_{2,2}$ and $J_{2,4}$, we have for $i,j=1,2$ with $i\neq j$,
\begin{eqnarray*}
&&\int_{\mathbb{R}} \left|\frac{1}{\sqrt{t_{1}-s}}-\frac{1}{\sqrt{t_{2}-s}}\right|^{\alpha}
p_{t_{i}-s}(x-y)^{2-\alpha}e^{2\beta |y|}dy\\
&\leq& k_{11}\int_{\mathbb{R}} \left|\frac{t_{2}-t_{1}}{\left(t_{1}-s\right)\sqrt{t_{2}-s}+\left(t_{2}-s\right)\sqrt{t_{1}-s}}\right|^{\alpha}
\frac{1}{\left(\sqrt{t_{i}-s}\right)^{1-\alpha}}p_{\frac{t_{i}-s}{2-\alpha}}(x-y)e^{2\beta |y|}dy\\
&\leq& k_{11} \frac{\left|t_{2}-t_{1}\right|^{\alpha}}{\left(t_{j}-s\right)^{\alpha}}\frac{e^{2\beta |x|}}{\left(t_{i}-s\right)^{\frac{1}{2}}},
\end{eqnarray*}
and
\begin{equation*}
k_{11}\int_{0}^{t_{1}}\frac{\left|t_{2}-t_{1}\right|^{\alpha}}{\left(t_{j}-s\right)^{\alpha}}
\frac{e^{2\beta |x|}}{\left(t_{i}-s\right)^{\frac{1}{2}}}ds
\leq k_{11}e^{2\beta |x|} \left|t_{2}-t_{1}\right|^{\alpha}\int_{0}^{t_{1}}\left(t_{1}-s\right)^{-\left(\alpha + \frac{1}{2}\right)}ds
\leq k_{11}e^{2\beta |x|} \left|t_{2}-t_{1}\right|^{\alpha},
\end{equation*}
for $\alpha < 1/2$. From values for $\alpha$ found above for each term of $J_2$, we require $0<\alpha < 1/4$ and obtain,
\begin{equation*}
J_{2}\leq k_{12}\left|t_{2}-t_{1}\right|^{\frac{\alpha n}{2}},
\end{equation*}
where $k_{12}$ is independent of $\varepsilon$. Furthermore, noting the bound for $J_{1}$ in \eqref{J1bound} we confirm our assumption of $n>8$ required to satisfy condition \eqref{12.3}.
 \end{proof}
We now verify that the limit set for $\{Z^{\varepsilon}_{t}(y)\}_{0<\varepsilon<1}$ is $L_{1}$ given in Theorem 4. For better presentation, we let $\varepsilon = 1/(c^{j})$, where $c>1$ and $j\geq 1$.
 \begin{lemma}\label{limit}
For any $g\in L_{1}$, $\varepsilon >0$, and $c>1$, there exists $j_{0}\in \mathbb{N}$ such that for every $j>j_{0}$, $P\left(\|Z^{\frac{1}{c^{j}}}-g\|_{\beta}\leq \varepsilon \text{ i.o.}\right)=1$.
\end{lemma}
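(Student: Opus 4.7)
The plan is to prove the ``visits $g$ infinitely often'' statement along the subsequence $\epsilon_{j}=1/c^{j}$ by combining a local LDP lower bound (obtained from Proposition~\ref{Proposition5} via Girsanov) with an independence-inducing decomposition of the driving noise and the second Borel--Cantelli lemma. First, writing $g=S(h,\cdot)$ with $I(g)=\tfrac12\int_0^1\!\int_U h_s(a)^2\lambda(da)\,ds\le1$, define $Y^{\epsilon_{j}}$ as in \eqref{Yepsilon} with this $h$. Applying Girsanov to shift $W$ by $\sqrt{2\log\log c^{j}}\,h$ shows that $Y^{\epsilon_{j}}$ under $P$ has the same law as $Z^{\epsilon_{j}}$ under the tilted measure $Q$; combining Proposition~\ref{Proposition5} with a standard bound on $dP/dQ$ yields, for any $\delta>0$ and all sufficiently large $j$,
\[
P\bigl(\|Z^{\epsilon_{j}}-g\|\le \epsilon\bigr)\;\ge\;\exp\!\bigl(-(2I(g)+\delta)\log\log c^{j}\bigr).
\]

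Since the right-hand side behaves like $j^{-(2I(g)+\delta)}$, it is only summable-divergent for $I(g)<\tfrac12$, so a direct Borel--Cantelli does not reach the full range $I(g)\le 1$. To close this gap I would follow Baldi~\cite{Baldi1} and construct independent proxies $\widehat Z^{j}$ by restricting the driving white noise to pairwise disjoint Borel regions $R_{j}\subset[0,1]\times U$: the mild-solution representation \eqref{mild} together with the independent-scattering property of the Gaussian random measure $W$ makes $\widehat Z^{j}$ mutually independent, and the same Girsanov argument produces the same exponential lower bound for $\widehat Z^{j}$ as for $Z^{\epsilon_{j}}$. A moment estimate in the spirit of Theorem~\ref{tight} and Lemma~\ref{Ylemma} then shows $\|Z^{\epsilon_{j}}-\widehat Z^{j}\|\to 0$ fast enough that
\[
\bigl\{\|\widehat Z^{j}-g\|\le\tfrac{\epsilon}{2}\bigr\}\cap\bigl\{\|Z^{\epsilon_{j}}-\widehat Z^{j}\|\le\tfrac{\epsilon}{2}\bigr\}\ \subset\ \bigl\{\|Z^{\epsilon_{j}}-g\|\le \epsilon\bigr\}
\]
with the second event in the intersection holding for all large $j$ almost surely.

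Because the events $\bigl\{\|\widehat Z^{j}-g\|\le\epsilon/2\bigr\}$ are independent and their probabilities sum to infinity once $c>1$ and $\delta$ have been chosen appropriately, the second Borel--Cantelli lemma says they occur infinitely often almost surely; this transfers to $\bigl\{\|Z^{\epsilon_{j}}-g\|\le\epsilon\bigr\}$ via the inclusion above, yielding the statement for every $g\in L_{1}$ with $I(g)<1$. To cover the boundary case $I(g)=1$ I would approximate $g$ by $g_{n}=(1-1/n)\,g$, note $g_{n}\in L_{1}$ with $I(g_{n})<1$ and $g_{n}\to g$ in $\mathcal{C}([0,1];\mathbb{B}_{\beta})$, apply the previous conclusion to each $g_{n}$, and pass to the limit.

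The main obstacle will be the construction of the independent proxies $\widehat Z^{j}$: because all processes $Z^{\epsilon}$ are driven by the same $W$ on the same time interval $[0,1]$, the classical ``independent-increment'' device from Brownian LIL is not directly available. One must choose the regions $R_{j}$ so that they are pairwise disjoint yet also contain essentially all of the noise that drives $Z^{\epsilon_{j}}$ to its small-ball behaviour at scale $c^{-j}$. Verifying that the discarded noise contribution is negligible requires the smoothing of the Brownian semigroup $P_{t-s}$ together with the quantitative estimates of Lemma~\ref{Ylemma} and the growth conditions \eqref{condition1}--\eqref{condition2}; this is the technical heart of the argument.
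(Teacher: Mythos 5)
Your route is genuinely different from the paper's, and as written it has two concrete gaps, one structural and one numerical. (i) The independence device cannot be carried out in this setting. In the classical Strassen argument the index is time, and the path at scale $c^{j}$ is essentially a functional of the noise increment over the block $(c^{j-1},c^{j}]$, which is disjoint across $j$; here the index is the branching/mutation parameter, and every $Z^{\epsilon_{j}}$ is driven by the \emph{same} white noise over the \emph{same} region $[0,1]\times U$ — the dependence on $j$ enters only through the deterministic factor $1/\sqrt{2\log\log c^{j}}$ and through $G^{\epsilon_{j}}_{s}$ in \eqref{mild}. Consequently a region $R_{j}$ carrying ``essentially all of the noise that drives $Z^{\epsilon_{j}}$'' must exhaust essentially all of $[0,1]\times U$, so infinitely many such regions cannot be pairwise disjoint: proxies $\widehat Z^{j}$ satisfying your approximation requirement cannot be independent. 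This is not a technical point to be checked later; it is where the plan breaks. (ii) Even granting independent proxies obeying your lower bound, $P(\|\widehat Z^{j}-g\|\le\epsilon/2)\asymp j^{-(2I(g)+\delta)}$ is summable whenever $I(g)>1/2$, and no choice of $c>1$ or $\delta>0$ changes this, so the second Borel–Cantelli lemma cannot reach all of $L_{1}$; the boundary approximation $g_{n}=(1-1/n)g$ does not help because the obstruction already occurs in the interior of $L_{1}$.

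The paper avoids both issues by Baldi's transfer scheme \cite{Baldi1}: writing $g=S_{t}(h,y)$ with $\tfrac12\int_{0}^{1}\int_{U}|h_{s}(a)|^{2}\lambda(da)ds\le1$, it sets $F_{j}=\{\|Z^{1/c^{j}}-g\|\le\epsilon\}$ and $G_{j}=\{\|\tfrac{1}{\sqrt{\log\log c^{j}}}W^{c^{j}}-h\|\le\eta\}$, invokes Strassen's LIL for the (rescaled) driving Wiener process \cite{Strassen} to get $P(\limsup_{j}G_{j})=1$ — all of the independence/second Borel–Cantelli content is absorbed into this known result about the noise, where the block structure genuinely exists — and then uses the Freidlin–Wentzell-type estimate of Theorem \ref{Rlemma} (established through Proposition \ref{Proposition5} and Girsanov) to bound $P(F_{j}^{c}\cap G_{j})$ by $\exp(-R\log\log c^{j})\le K_{R}j^{-2}$ for $R>2$. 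The \emph{first} Borel–Cantelli lemma then gives $P(\limsup_{j}(F_{j}^{c}\cap G_{j}))=0$, and $1=P(\limsup_{j}G_{j})\le P(\limsup_{j}F_{j})$ follows. Note that only the upper (exponential closeness) estimate is needed: no Girsanov lower bound on $P(F_{j})$, no independence construction for the solution, and no summability-of-divergent-series argument. To salvage your plan you would essentially have to reprove, at the level of the noise, exactly what Strassen's theorem already supplies, which is what the transfer argument is designed to avoid.
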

\begin{proof}
Let $g\in L_{1}$ and $h_{s}\in L^{2}\left([0,1]\times U, ds\lambda(da)\right)$ such that $g=S_{t}(h_{s},y)$ and \\ $\frac{1}{2}\int_{0}^{t}\int_{U}|h_{s}(a)|^{2}\lambda(da)ds \leq 1$. Denote,
\begin{equation*}
F_{j}:=\left\{\|Z^{\frac{1}{c^{j}}}-g\|_{\beta}\leq \varepsilon\right\} \text{ and } G{j}:= \left\{\left\|\frac{1}{\sqrt{2\log \log c^{j}}}W-h\right\|_{\infty}\leq \eta \right\},
\end{equation*}
for some constant $\eta >0$. We need to prove that $P\left(\limsup_{j\rightarrow \infty}F_{j}\right)=1$. Based on the Strassen's compact LIL for Brownian sheets proved in Section 1.4 in \cite{Stroock}, $P\left(\limsup_{j}G_{j}\right)=1$. Let $R>1$, then by \eqref{maininequality} we have,
\begin{equation}\label{Fj}
P\left(F_{j}^{c}\cap G_{j}\right)\leq \exp\left(-2R\log \log c^{j}\right)= \frac{K_{R}}{j^{2R}}\leq \frac{K_{R}}{j^{2}},
\end{equation}
where we used the fact that for $k\in \mathbb{R}$,
\begin{equation*}
\exp\left(-k\log\log c^{j}\right)= \frac{K_{k}}{j^{k}}.
\end{equation*}
Now by the Borel-Cantelli lemma applied to (\ref{Fj}), we arrive at,
\begin{equation*}
P\left(\limsup_{j\rightarrow \infty}F_{j}^{c}\cap G_{j}\right)=0.
\end{equation*}
Thus, we obtain,
\begin{equation*}
1= P\left(\limsup_{j\rightarrow \infty }G_{j}\right)\leq P\left(\limsup_{j \rightarrow \infty}G_{j}\cap F_{j}\right)+ P\left(\limsup_{j\rightarrow \infty}G_{j}\cap F_{j}^{c}\right)\leq P\left(\limsup_{j\rightarrow \infty}F_{j}\right),
\end{equation*}
obtaining the result.
\end{proof}

As for the Classical LIL, note that it is sufficient to prove \eqref{limsup}, where \eqref{liminf} may be proved analogously. As above, we use the notation, $\varepsilon:=1/(c^{j})$. Then for every $\varepsilon >0$, \eqref{limsup} is equivalent to
\begin{equation*}
P\left(\left\| \frac{u_{t}^{\frac{1}{c^{j}}}(y)-u^{0}_{t}(y)}{\sqrt{\frac{2}{c^{j}}\log \log c^{j}}} - 1 \right\|_{\beta} >\varepsilon \text{  i.o.}\right)=0,
\end{equation*}
which may further be written as,
\begin{equation}\label{frac}
\lim_{j\rightarrow \infty} P\left(\sup_{k\geq j} \left\| \frac{u^{\frac{1}{c^{k}}}_{t}(y)-u^{0}_{t}(y)}{\sqrt{\frac{2}{c^{k}}\log \log c^{k}}}-1\right\|_{\beta}>\varepsilon\right)=0.
\end{equation}
By Chebyshev inequality,
\begin{eqnarray*}
&&\lim_{j\rightarrow \infty} P\left( \sup_{k\geq j}\sup_{y}e^{-\beta |y|} \frac{|u_{t}^{\frac{1}{c^{k}}}(y)-u^{0}_{t}(y)|}{\sqrt{\frac{2}{c^{k}}\log\log c^{k}}}>\varepsilon + \sup_{y\in \mathbb{R}}e^{-\beta |y|}\right) \\
&\leq& (\varepsilon + \sup_{y}e^{-\beta |y|})^{-2}\lim_{j\rightarrow \infty} \mathbb{E} \sup_{k\geq j} \sup_{y}e^{-2\beta |y|}
\frac{|u_{t}^{\frac{1}{c^{k}}}(y)-u^{0}_{t}(y)|^{2}}{\frac{2}{c^{k}}\log \log c^{k}}.
\end{eqnarray*}
Furthermore,
\begin{eqnarray}\label{righthand}
&&\mathbb{E} \sup_{k\geq j} \sup_{y} e^{-2\beta |y|} \left|u_{t}^{\frac{1}{c^{k}}}(y)-u^{0}_{t}(y)\right|^{2}\nonumber \\
&\leq&  \mathbb{E} \sup_{k\geq j} \sup_{y}e^{-2\beta |y|}\frac{1}{c^{k}}\left|\int_{0}^{t}\int_{U}\int_{\mathbb{R}}p_{t-s}(x-y)G(a,x,u_{s}^{\varepsilon}(x))dxW(dads)\right|^{2},
\end{eqnarray}
which analogous to previous estimates in this section may be bounded above by $(1/c^{k})\tilde{M}$ for a positive constant, $\tilde{M}$. Hence, we arrive at,
\begin{eqnarray*}
&&\lim_{j\rightarrow \infty} P\left(\sup_{k\geq j} \sup_{y\in \mathbb{R}} e^{-\beta |y|} \frac{|u_{t}^{\frac{1}{c^{k}}(y)}-u^{0}_{t}(y)|}{\sqrt{\frac{2}{c^{k}}\log \log c^{k}}} >\varepsilon + \sup_{y} e^{-\beta |y|}\right)\\
&\leq& (\varepsilon + \sup_{y}e^{-\beta |y|})^{-2}\lim_{j\rightarrow \infty}\sup_{k\geq j}\frac{\tilde{M}}{(2\log \log c^{k})},
\end{eqnarray*}
which implies \eqref{frac}, noting that $\varepsilon >0$ was arbitrarily chosen.

\section{LIL for SBM and FVP}
In this section we apply the results from Section Three to derive the MDP and the two types of LIL for SBM and FVP. To achieve the MDP for these models, using relations \eqref{SBM d} and \eqref{FVP d}, we have by \eqref{w}, $v_{t}^{\varepsilon}(y)= \int_{0}^{y}\omega_{t}^{\varepsilon}(dx)$ for SBM and $v_{t}^{\varepsilon}(y)= \int_{-\infty}^{y}\omega_{t}^{\varepsilon}(dx)$ for FVP. In Lemma 6 in \cite{large} it was shown that for $\mathcal{A}$, the set of nondecreasing functions, the map, $\xi: \mathbb{B}_{\beta} \cap \mathcal{A} \rightarrow \mathcal{M}_{\beta}(\mathbb{R})$ defined as,
\begin{equation}\label{int}
\xi(u)(B)= \int 1_{B}(y)du(y),
\end{equation}
is continuous for all $B\in \mathbb{B}(\mathbb{R})$. Also the same reasoning may be applied to prove that the map $\tilde{\xi}:\mathbb{B}_{\beta} \cap \mathcal{A}\rightarrow \mathcal{P}_{\beta}(\mathbb{R})$ defined by \eqref{int} is continuous. Hence, SBM and FVP may be written as continuous functions of the solution of SPDE, $v_{t}^{\varepsilon}(y)$, and thus by the contraction principle, MDP follows for SBM and FVP. It is left to determine their exact form of rate function. In Section 4 of \cite{moderate}, for $h_{s}\in L^{2}\left([0,1]\times U, ds\lambda(da)\right)$, the following expression was derived for $\frac{1}{2}\inf \int_{0}^{1}\int_{U}\left|h_{s}(a)\right|^{2}\lambda(da)ds$ for both SBM and FVP by letting $a=u_{t}^{0}(y)$,
\begin{equation*}
\frac{1}{2}\int_{0}^{1}\int_{U}\left|h_{s}(a)\right|^{2}\lambda(da) ds= \frac{1}{2}\int_{0}^{1}\int_{\mathbb{R}}\left|\frac{d\left(\dot{\omega}-\frac{1}{2}\Delta^{*}\omega\right)}
{d\mu_{t}^{0}}y\right|\mu_{t}^{0}(dy)dt,
\end{equation*}
where $\mu_{t}^{0}\in \mathcal{H}_{\omega_{0}}$ in the case of SBM and $\mu_{t}^{0} \in \tilde{\mathcal{H}}_{\omega_{0}}$ for FVP and hence we obtain \eqref{rate2} and \eqref{rate3}. \\
For the two types of LIL, as stated in Section Two, based on the SPDE characterization of the two population models, SBM and FVP are given by the class of SPDE (\ref{SPDE}) with $G(a,y,u_{s}^{\varepsilon}(y)):=1_{0<a<u_{s}^{\varepsilon}(y)}+1_{u_{s}^{\varepsilon}(y)<a<0}$ and $G(a,y,u_{s}^{\varepsilon}(y):= 1_{a\leq u_{s}^{\varepsilon}(y)}-u_{s}^{\varepsilon}(y)$, respectively, where in both cases, $G(a,y,u_{s}^{\varepsilon}(y))$ satisfies conditions \eqref{con1} and \eqref{con2}. Using these SPDE characterizations we may form the analog process $Z_{t}^{\varepsilon}(y)$ and by estimates in Section Three obtain the Freidlin-Wentzell inequality \eqref{maininequality} and tightness of $\{Z_{t}^{\varepsilon}(y)\}_{0<\varepsilon<1}$ in $\mathcal{C}([0,1];\mathcal{M}_{\beta})$ and $\mathcal{C}([0,1];\mathcal{P}_{\beta})$, respectively. Furthermore, we have the continuity of their controlled PDE version $S_{t}(h,y)$ with respect to uniform topology when restricted on level sets $\{\tilde{I}\leq a\}$. Thus, we obtain the Strassen's compact LIL for SBM and FVP with limit sets being $L_{2}$ and $L_{3}$, respectively. The classical LIL may also be achieved in respective spaces following the same steps as in Section Three with no additional conditions required.

\end{document}